\documentclass[11pt]{article}
\usepackage[margin=0.75in]{geometry}

\usepackage{amssymb,amsmath,mathtools,amsthm,thmtools,amsfonts,dsfont}
\usepackage{thm-restate}

\usepackage{url} 
\usepackage{hyperref}
\hypersetup{
colorlinks,
linkcolor={red}, 
citecolor={black},
urlcolor={blue!60!black},
pdftitle={Improved bounds on completion of partial Latin squares},
pdfauthor={Allsop, Bowtell, Lesgourgues, and Petrova}}

    \usepackage{listings}
    \usepackage{xcolor}
    \usepackage{tikz}

    \definecolor{pythonblue}{RGB}{0,0,180}
    \definecolor{pythongreen}{RGB}{0,120,0}
    \definecolor{pythonred}{RGB}{170,30,30}
    \definecolor{pythonpurple}{RGB}{120,0,120}
    \definecolor{pythongray}{RGB}{100,100,100}

    \lstdefinestyle{coloredpython}{
        language=Python,
        basicstyle=\ttfamily\footnotesize,
        keywordstyle=\color{pythonblue}\bfseries,
        commentstyle=\color{pythongreen}\itshape,
        stringstyle=\color{pythonred},
        morekeywords=[2]{
            print,float,len,range,
            array,asarray,where,linspace,argmin,sqrt,
            symbols,Rational,Float,diff,nsolve
        },
        keywordstyle=[2]\color{pythonpurple},
        numbers=left,
        numberstyle=\tiny\color{pythongray},
        numbersep=8pt,
        frame=single,
        breaklines=true,
        showstringspaces=false,
        tabsize=4,
        columns=fullflexible
    }

    \lstdefinestyle{python}{
        language=Python,
        basicstyle=\ttfamily\footnotesize,
        keywordstyle=\bfseries,
        commentstyle=\itshape\color{gray},
        stringstyle=\color{gray!70!black},
        numbers=left,
        numberstyle=\tiny\color{gray},
        numbersep=8pt,
        frame=single,
        breaklines=true,
        breakatwhitespace=false,
        showstringspaces=false,
        tabsize=4,
        columns=fullflexible
    }

    \lstdefinestyle{output}{
        basicstyle=\ttfamily\scriptsize,
        numbers=none,
        frame=single,
        breaklines=true,
        showstringspaces=false,
        columns=fullflexible,
        keepspaces=true,
        aboveskip=1em,
        belowskip=1em
    }    

    \definecolor{mathematicablue}{RGB}{0,0,180}
    \definecolor{mathematicagreen}{RGB}{0,120,0}
    \definecolor{mathematicared}{RGB}{170,30,30}
    \definecolor{mathematicapurple}{RGB}{120,0,120}
    \definecolor{mathematicagray}{RGB}{100,100,100}
    
    \lstdefinelanguage{Wolfram}{
        morekeywords={
            ClearAll,Piecewise,Sqrt,NMinimize,Minimize,
            WorkingPrecision,AccuracyGoal,PrecisionGoal,
            Print,First,N,Reals
        },
        sensitive=true,
        morecomment=[s]{(*}{*)},
        morestring=[b]"
    }
    
    \lstdefinestyle{wolfram}{
        language=Wolfram,
        basicstyle=\ttfamily\footnotesize,
        keywordstyle=\color{blue}\bfseries,
        commentstyle=\color{green!50!black}\itshape,
        stringstyle=\color{red!70!black},
        numbers=left,
        numberstyle=\tiny\color{gray},
        numbersep=8pt,
        frame=single,
        breaklines=true,
        showstringspaces=false,
        columns=fullflexible,
        keepspaces=true
    }

\usepackage[utf8]{inputenc}
\usepackage{color}
\usepackage{float} 
\usepackage{subcaption} 

\usepackage{enumitem}
\setlist[itemize]{topsep=.1in,itemsep=.1in}
\setlist[enumerate]{topsep=.1in,itemsep=.1in}
\setlist[enumerate,1]{label={(\roman*)}}
\setlist[enumerate,2]{label={(\alph*)}}
\setlist[enumerate,3]{label={(\arabic*)}}

\setlist[itemize]{nolistsep,noitemsep, topsep=0pt}

\newcommand{\eps}{\varepsilon}
\renewcommand{\epsilon}{\varepsilon}

\newcommand{\mc}[1]{\mathcal{#1}}%

\DeclareMathOperator*{\argmin}{argmin}

\DeclarePairedDelimiter{\parens}{(}{)}
\DeclarePairedDelimiter{\set}{\{}{\}}
\DeclarePairedDelimiter{\brackets}{[}{]}

\DeclarePairedDelimiter{\abs}{\lvert}{\rvert}   
\DeclarePairedDelimiter{\size}{\lvert}{\rvert}   

\usepackage[noabbrev,capitalise]{cleveref}

\theoremstyle{plain}
\newtheorem{thm}{Theorem}[section]
\newtheorem{lem}[thm]{Lemma}
\newtheorem{prop}[thm]{Proposition}

\newtheorem{cor}[thm]{Corollary}

\newtheorem{conj}[thm]{Conjecture}

\newenvironment{proofclaim}[1][\proofname]
{\proof[#1]}
{\endproof}

\declaretheorem[
  style=plain,
  name=Claim,
  within=theorem,
]{claim}
\renewcommand{\theclaim}{\thetheorem.\arabic{claim}}

\newenvironment{nonlateproof}[1]
 {%
  \renewcommand{\theclaim}{\ref*{#1}.\arabic{claim}}%
  \begin{proof}%
 }
 {\end{proof}}

\usepackage{etoolbox}
 \AtEndEnvironment{proof}{\setcounter{claim}{0}}

\theoremstyle{plain} 
\newcommand{\thistheoremname}{}
\newtheorem{genericthm}{\thistheoremname}

\theoremstyle{definition}
\newtheorem{definition}[thm]{Definition}

\crefname{equation}{}{}
\crefname{lem}{Lemma}{Lemmas}
\crefname{claim}{Claim}{Claims}
\crefname{thm}{Theorem}{Theorems}
\crefname{prop}{Proposition}{Propositions}
\crefname{enumi}{}{}
\crefname{lstlisting}{Code}{Codes}

\usepackage[dvipsnames, table]{xcolor}

\newcommand{\Zout}{Z_{\textnormal{out}}}
\newcommand{\Zin}{Z_{\textnormal{in}}}
\newcommand{\Pout}{P_{\textnormal{out}}}
\newcommand{\Pin}{P_{\textnormal{in}}}
\newcommand{\Nout}{N_{\textnormal{out}}}
\newcommand{\Nin}{N_{\textnormal{in}}}

\newcommand{\zout}{z_{\textnormal{out}}}
\newcommand{\zin}{z_{\textnormal{in}}}
\newcommand{\pout}{p_{\textnormal{out}}}
\newcommand{\pin}{p_{\textnormal{in}}}
\newcommand{\nout}{n_{\textnormal{out}}}
\newcommand{\nin}{n_{\textnormal{in}}}

\newcommand{\dem}{\textnormal{dem}}
\newcommand{\disDP}{\textnormal{dis}_{\textnormal{DP}}}

\title{Improved bounds on completion of partial Latin squares}

\author{ 
Jack Allsop 
\thanks{Institut f\"{u}r Mathematik, Freie Universit\"{a}t Berlin, Germany. {\tt allsop@mi.fu-berlin.de}}
\and
Candida Bowtell
\thanks{School of Mathematics, University of Birmingham, Edgbaston, Birmingham, UK. {\tt c.bowtell@bham.ac.uk.}} 
\and
Thomas Lesgourgues
\thanks{School of Mathematics and Statistics, University of New South Wales, Australia. {\tt t.lesgourgues@unsw.edu.au}}
\and
Kalina Petrova
\thanks{Institute of Science and Technology Austria (ISTA), Klosterneuburg,
Austria. {\tt kalina.petrova@ist.ac.at}} }
\date{\today}

\usepackage{tikz}
\usepackage{calc}
\usetikzlibrary{calc,backgrounds,decorations.pathreplacing}
\usepackage{ifthen}

\tikzstyle{vertex}=[circle, draw, fill=black, inner sep=0pt, minimum size=4pt]
\newcommand{\vertex}{\node[vertex]}

\newcommand{\ColoredTriangle}[4]
{\begin{tikzpicture}[x=#1 cm, y=#1 cm]
    \vertex (a) at (-30:1) [label=180:]{};
    \vertex (b) at (90:1) [label=90:]{};
    \vertex (c) at (210:1) [label=0:]{};

    \path 
        (a) edge[#2] (c)
        (a) edge[#3] (b)
        (c) edge[#4] (b);

\end{tikzpicture}}

\newcommand{\ColoredEdge}[2]
{\begin{tikzpicture}[x=#1 cm, y=#1 cm]

    \path (0,0) edge[color=#2] (1,0);

\end{tikzpicture}}

\newcommand{\DischargingQuasiNeg}[1]
{\begin{tikzpicture}[x=#1 cm, y=#1 cm]
    \vertex (a) at (-30:1) [label=0:$v$]{};
    \vertex (b) at (90:1) [label=90:$u$]{};
    \vertex (c) at (210:1) [label=180:$w$]{};

    \path 
        (a) edge[color=blue] node[below]{$b$} (c)
        (a) edge[color=red] node[right]{$r_2$} (b)
        (c) edge[color=red] node[left]{$r_1$} (b);       

\end{tikzpicture}}

\newcommand{\DischargingQuasiPos}[1]
{\begin{tikzpicture}[x=#1 cm, y=#1 cm]
    \vertex (a) at (-30:1) [label=0:$v$]{};
    \vertex (b) at (90:1) [label=90:$u$]{};
    \vertex (c) at (210:1) [label=180:$w$]{};

    \path 
        (a) edge[color=blue] node[below]{$b$} (c)
        (a) edge[color=red] node[right]{$r$} (b)
        (c) edge[color=blue] node[left]{} (b);       

\end{tikzpicture}}

\newcommand{\DischargingQuasiPosBis}[1]
{\begin{tikzpicture}[x=#1 cm, y=#1 cm]
    \vertex (a) at (-30:1) [label=0:$v$]{};
    \vertex (b) at (90:1) [label=90:$u$]{};
    \vertex (c) at (210:1) [label=180:$w$]{};

    \path 
        (a) edge[color=blue] node[below]{$b$} (c)
        (a) edge[color=blue] node[right]{} (b)
        (c) edge[color=red] node[left]{$r$} (b);       

\end{tikzpicture}}

\newcommand{\RedEdges}[1]
{\begin{tikzpicture}[x=#1 cm, y=#1 cm]

    \vertex (x) at (0,0) [label=0:$u$]{};
    \vertex (y) at (-1,0) [label=200:$v$]{};
    \path (x) edge[color=red, line width=1.5] node[below]{$r$} (y);
    
    \vertex(lf1) at (-2,1)[]{};
    \path (y) edge[color=blue]  (lf1)
          (x) edge[color=red]  (lf1);
    \begin{scope}[on background layer]
        \draw[fill=gray!50, opacity=0.75] (-2,1) ellipse (0.35 and 0.1) node[above=6pt]{$W^-_u$};
    \end{scope}

    \vertex(rf1) at (1,1)[]{};
    \path (y) edge[color=red]  (rf1)
          (x) edge[color=blue]  (rf1);
    \begin{scope}[on background layer]
        \draw[fill=gray!50, opacity=0.75] (1,1) ellipse (0.35 and 0.1) node[above=6pt]{$W^-_v$};
    \end{scope}

    \vertex(sk1) at (-0.5,1)[]{};
    \path (y) edge[color=blue]  (sk1)
          (x) edge[color=blue]  (sk1);
    \begin{scope}[on background layer]
        \draw[fill=gray!50, opacity=0.75] (sk1) ellipse (0.35 and 0.1) node[above=6pt]{$W^+$};
    \end{scope}

\end{tikzpicture}}

\begin{document}

\maketitle
\begin{abstract}
A Latin square of order $n$ is an $n \times n$ array filled with $n$ symbols so that each symbol appears exactly once in every row and column. A partial Latin square of order $n$ is an $n
\times n$ array whose cells are either empty or filled in such a way that
each symbol appears at most once in every row and column, and at most $n$ distinct symbols are used. In 1983, Daykin and H{\"a}ggkvist conjectured that every partial Latin square in which each row and column contains at most $n/4$ symbols, and each symbol is used at most $n/4$ times, can be completed to a Latin square. We prove that every partial Latin square in which each row and column contains at most $0.231n$ symbols, and each symbol is used at most $0.231n$ times, can be completed to a Latin square, significantly improving the previous best-known bound of $0.08n$, obtained by Fu and Weng. This problem can be seen as a partite analogue of the Nash-Williams conjecture concerning triangle decompositions of dense graphs, recently proved in the breakthrough work of Delcourt and Postle. Our proof uses a `discharging' strategy, adapting the approach of Delcourt and Postle, combined with a novel method to achieve a `balancedness' property, required for the partite setting.
\end{abstract}


\section{Introduction}

A \emph{Latin square} of order $n$ is an $n \times n$ array with entries
from a set of $n$ symbols (often taken to be $[n]:=\{1,2,\dots,n\}$) with
the property that each symbol appears exactly once in every row and every
column. The formal mathematical study of Latin squares was initiated by Euler~\cite{euler} in the 18th Century, and has led to a vast body of work. In this manuscript we are interested in the problem of completing a partially filled $n \times n$ array to a Latin square of order $n$. We define a \emph{partial Latin square} of order $n$ to be an $n
\times n$ array whose cells are either empty or filled in such a way that
each symbol appears at most once in every row and every column, and at most $n$ distinct symbols are used. We say that a partial Latin square is {\em  completable} if we can add symbols to complete the array to a Latin square.

In 1981, Smetaniuk~\cite{S1981}, and independently Anderson and Hilton~\cite{AH1983}, showed that a partial Latin square with at most $n-1$ entries can always be completed to a Latin square. Without further restriction on where the symbols are placed, or which symbols are used, this bound is best possible, noting, for example, that a placement of symbols $1, \ldots, n-1$ in the first $n-1$ columns of the first row, and symbol $n$ into the final column of the second row cannot be completed. Moreover, this example shows that even when each symbol and each column is used only once, we cannot always complete a partial Latin square. To this end, a natural consideration is to limit the number of filled cells using any particular row, column or symbol.
In 1983, Daykin and H{\"a}ggkvist~\cite{DH1983} conjectured that any partial Latin square in which each row, column, and symbol is in at most $n/4$ entries is completable. Subsequent work by Chetwynd and H\"{a}ggkvsit~\cite{CH1985}, Gustavsson~\cite{G1991} and finally Bartlett~\cite{B2013} established a version of the conjecture with the constant $1/4$ replaced by an absolute constant $c\approx 10^{-4}$, showing that a partial Latin square with a quadratic number of entries is completable when these are sufficiently well spread over the rows, columns, and symbols.

To discuss more recent progress, it is convenient to switch viewpoints. A Latin square of order $n$ is equivalent to a $K_3$-decomposition of the complete $3$-partite graph $K_{n,n,n}$, that is, a partition of the edges of $K_{n,n,n}$ into copies of $K_3$, where the three parts of $K_{n,n,n}$ represent the rows, columns, and symbols of the Latin square. 
Similarly, any partial Latin square $P$ corresponds to a fixed subgraph $G \subseteq K_{n,n,n}$ which has a $K_3$-decomposition, and the completion problem for $P$ amounts to finding a $K_3$-decomposition in $K_{n,n,n} - E(G)$, the tripartite complement of $G$.  
Given a tripartite graph $G$ with vertex partition $(V_1,V_2,V_3)$ and a vertex $v\in V_i$, we denote \(d(v,V_j):= \lvert \set*{uv\in G\colon u\in V_j}\rvert, \) for each $i,j \in [3]$ and we define the {\em minimum partite-degree} of $G$ by \(\hat\delta(G) := \min\set*{d(v, V_j ) \colon j \in [3], v \in V (G)\setminus V_j}\).

The conjecture of Daykin and H\"{a}ggkvist is therefore implied by the following stronger one, first presented in this way by Barber, K\"{u}hn, Lo, Osthus and Taylor~\cite{BKLOT2017}, but implicitly present in the work of Gustavsson~\cite{G1991}. We say that a tripartite graph with vertex partition $(V_1,V_2,V_3)$ is $K_3$-divisible if for each $v \in V_i$, we have $d(v,V_{j}) = d(v,V_{h})$, whenever $\{i,j,h\} = \{1,2,3\}$. Note that $K_3$-divisibility is a necessary condition for $G$ to have a $K_3$-decomposition, since each copy of $K_3$ containing $v \in V_i$ contains precisely one edge joining $v$ to each of the other two parts $V_j$ and $V_h$. 
\begin{conj}\label{conj:decompo}
    Suppose $G$ is a $K_3$-divisible tripartite graph on vertex set $(V_1,V_2,V_3)$ with $|V_1|=|V_2|=|V_3|=n$, where $n$ is sufficiently large. 
    If $\hat{\delta}(G)\geq\frac{3n}{4}$, then $G$ contains a $K_3$-decomposition.
\end{conj}

This conjecture is stronger than that of Daykin and H\"aggkvist, as it does not require the complement of $G$ in $K_{n,n,n}$ to admit a $K_3$-decomposition.

\cref{conj:decompo} concerns the minimum partite-degree {\em threshold} for $K_3$-decompositions. Let $\hat{\delta}^*$ denote the infimum over all $c$ such that every $K_3$-divisible tripartite graph $G$ with vertex partition $(V_1,V_2,V_3)$ satisfying $|V_1|=|V_2|=|V_3|=n$ with $n$ sufficiently large and $\hat{\delta}(G)\geq cn$ admits a $K_3$-decomposition. Then \cref{conj:decompo} is equivalent to the assertion that $\hat{\delta}^* \leq \frac{3}{4}
$. As we detail further below, this bound is best possible by a construction of Wanless~\cite{W2002}.

An important and related notion is the {\em fractional relaxation} of the $K_3$-decomposition problem, in which each copy of $K_3$ may be assigned a fractional weight in $\mathbb{R}_{\geq 0}$ (rather than simply being selected or not), subject to the condition that the total weight of the copies containing any given edge is exactly one. Formally, with $\mc{T}(G)$ denoting the family of copies of $K_3$ (also referred to as triangles from now on) in $G$, a fractional $K_3$-decomposition of $G$ is an assignment of non-negative weights $\phi:\mc{T}(G)\to\mathbb{R}_{\geq 0}$ to the triangles in $G$ such that
\[\sum_{T\in\mc{T}(G)\colon e\in T}\phi(T)=1\text{ for all }e\in E(G).\]
We define the threshold \(\hat{\delta}^*_f\) as the infimum over all $c$ such that every $K_3$-divisible tripartite graph on vertex set $(V_1,V_2,V_3)$ with $|V_1|=|V_2|=|V_3|=n$ for $n$ sufficiently large and $\hat{\delta}(G) \geq cn$ contains a fractional $K_3$-decomposition. Whilst the fractional problem is interesting in its own right, much of its significance comes from classical absorption results that relate fractional decomposition thresholds to their integral counterparts. In the partite setting, such a result was obtained by Barber, K{\"u}hn, Lo, Osthus, and Taylor~\cite{BKLOT2017} in the more general setting of $r$-partite graphs, building on the seminal work of Haxell and R{\"o}dl~\cite{HR01}.

\begin{thm}[\cite{BKLOT2017}, Corollary 1.6]
Let $\varepsilon > 0$ and let $n$ be a sufficiently large integer. Suppose $G$ is a $K_3$-divisible spanning subgraph of $K_{n,n,n}$ with $\hat{\delta}(G) \geq (\hat{\delta}^*_f+\varepsilon)n$. Then $G$ admits a $K_3$-decomposition.
\end{thm}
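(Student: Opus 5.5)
This is the absorption theorem of Barber, K\"uhn, Lo, Osthus and Taylor, and I would prove it by the iterative absorption scheme rather than a one-shot argument. The plan has three stages: fix a fixed-size absorbing structure, cover almost all edges using the fractional decomposition, and then absorb the leftover.

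\textbf{Step 1: a vortex.} Put $c:=\hat\delta^*_f+\varepsilon$. Choose a nested sequence $V(G)=U_0\supseteq U_1\supseteq\dots\supseteq U_\ell$. Each $U_{i+1}$ is a random subset of $U_i$ that is balanced across the three parts, with $|U_{i+1}\cap V_j|\approx\mu|U_i\cap V_j|$ for a small constant $\mu$, and $|U_\ell|$ is bounded by a constant. By Chernoff bounds and a union bound, with positive probability every vertex $v\in U_i\setminus V_j$ satisfies $d(v,U_{i+1}\cap V_j)\ge(c-\varepsilon/2)|U_{i+1}\cap V_j|$. Thus each level $G[U_i]$ inherits the partite minimum degree condition with a slightly weaker constant.

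\textbf{Step 2: cover down.} This is the key lemma and, I expect, the main obstacle. Suppose $H\subseteq G[U_i]$ is $K_3$-divisible, contains all edges of $G[U_{i+1}]$, and has $\hat\delta(H)\ge(c-\varepsilon/2)|U_i\cap V_j|$. We must find a triangle packing of $H$ covering every edge not inside $U_{i+1}$, while using only a sparse set of edges inside $U_{i+1}$.

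1. Set aside a sparse random subgraph $R$ of reserved edges.
2. Since $\hat\delta(H-E(H[U_{i+1}]))$ still exceeds $\hat\delta^*_f$ after the appropriate rescaling, the definition of $\hat\delta^*_f$ gives a fractional $K_3$-decomposition of a suitable subgraph. Convert it into an approximate triangle decomposition via the Haxell--R\"odl theorem, or equivalently by randomly sampling triangles according to $\phi$ and applying a R\"odl nibble or the Pippenger--Spencer theorem. This leaves a leftover $L$ of maximum degree $o(n)$.
3. Cover the leftover edges outside $U_{i+1}$ greedily using $R$. Edges with one endpoint in $U_{i+1}$ are covered by triangles whose other two edges lie in $U_{i+1}$. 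The key point is that each vertex $v\notin U_{i+1}$ has few leftover edges, and divisibility forces its leftover degrees to the two other parts to agree. We can therefore pair $vx$ with $vy$ (with $x,y$ in different parts) and find $z$ in the third part of $U_{i+1}$ adjacent to both via $R$, which a perfect-matching or Hall-type argument on the common neighbourhoods provides.

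The delicate issue is this partite balancing. It is handled by divisibility, which is preserved under triangle removal, together with the degree condition being strictly above the $2/3$-type threshold. That condition guarantees that common neighbourhoods in the third part are linear in size.

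\textbf{Step 3: absorb.} Iterating Step 2 down the vortex leaves a $K_3$-divisible leftover inside $G[U_\ell]$, and there are only boundedly many such graphs. Before Step 1, for each of them, set aside edge-disjoint exchangeable absorbers $A_F$ in $G$. Each $A_F$ is a graph such that both $A_F$ and $A_F\cup F$ have $K_3$-decompositions. They are built from gadgets that transform $F$ into a disjoint union of triangles, and the degree condition supplies many extensions for each gadget. Since the absorbers use only a constant number of edges, removing them barely affects the degree conditions. Running Steps 1 and 2 on the rest and absorbing the final leftover $F$ with $A_F$, while decomposing the other absorbers on their own, yields the $K_3$-decomposition of $G$.
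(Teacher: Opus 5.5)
The paper does not prove this statement; it is quoted from \cite{BKLOT2017}, whose proof is by iterative absorption, so your vortex/cover-down/absorber plan is the right architecture. There is, however, a genuine gap in Step~2.3, which you yourself identify as the main obstacle: covering the edges between $U_i\setminus U_{i+1}$ and $U_{i+1}$. Suppose the leftover edges with both ends outside $U_{i+1}$ have been covered by triangles whose third vertex lies in $U_{i+1}$. What remains at a vertex $v\in V_1\setminus U_{i+1}$ is a star with leaf sets $X_v\subseteq U_{i+1}\cap V_2$ and $Y_v\subseteq U_{i+1}\cap V_3$, with $|X_v|=|Y_v|$ by divisibility. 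These edges can only be covered by triangles $vxy$ where $xy$ is an unused edge of $G[X_v,Y_v]$. So you need a perfect matching of $G[X_v,Y_v]$ for every $v$, with all these matchings pairwise edge-disjoint.

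Your description does not produce this. A triangle through $vx$ with $v\notin U_{i+1}$ has at most one edge inside $U_{i+1}$. A vertex $z$ adjacent to both $x$ and $y$ only gives the $4$-cycle $vxzy$, not triangles. More importantly, the fact you lean on, that $v$ has few leftover edges, points the wrong way. If $X_v,Y_v$ are small sets dictated by an uncontrolled leftover, the degree condition says nothing about $G[X_v,Y_v]$, which may have no edges at all, so no Hall-type argument can work. You also never say how the unused edges of $R$ are themselves covered.

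The missing idea is to make these final stars large and random-like. Before the approximate decomposition, reserve for every $v\notin U_{i+1}$ a random, balanced set of about $\rho|U_{i+1}\cap V_j|$ edges into each of the two relevant parts of $U_{i+1}$. Here $\gamma\ll\rho\ll\mu$, where $\gamma n$ bounds the leftover degree. Take the greedy covering triangles from this reserve and leave the rest of it uncovered until the end. Then $X_v,Y_v$ are dominated by random subsets of $N(v)\cap U_{i+1}$. Since $d(x,N(v)\cap U_{i+1}\cap V_3)\ge d(x,U_{i+1}\cap V_3)+d(v,U_{i+1}\cap V_3)-|U_{i+1}\cap V_3|$, every vertex of $X_v$ has roughly a $\frac{2c-1}{c}>\frac12$ fraction of $Y_v$ as neighbours, and vice versa, so the perfect matching exists. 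One must still choose these matchings edge-disjointly over all $v$ while consuming only a sparse part of $G[U_{i+1}]$; this is where $\rho\ll\mu$, and some randomisation of the choices, enter.

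Three further points need an argument rather than a citation:
\begin{itemize}
\item $\hat\delta^*_f$ only speaks about $K_3$-divisible graphs, and in the partite setting this is unavoidable: summing a fractional decomposition over the triangles at $v\in V_1$ forces $d(v,V_2)=d(v,V_3)$. After deleting $R$ and $E(H[U_{i+1}])$ you must therefore delete a further sparse subgraph to restore divisibility, and cover it later.
\item Haxell--R\"odl bounds only the number of uncovered edges, not the maximum uncovered degree that the star argument needs. Sampling triangles according to $\phi$ would require spread-out weights, which the definition does not provide.
\item The absorbers cannot be set aside ``before Step 1''. They must absorb subgraphs of $G[U_\ell]$, so they are chosen after the vortex is fixed, and they should avoid the edges inside $U_1$.
\end{itemize}
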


Thus progress on \cref{conj:decompo}, up to an asymptotic resolution, can be achieved by improving the best bounds on \(\hat{\delta}^*_f\). In 2019 Bowditch and Dukes~\cite{BD2019} proved that $\hat{\delta}^*_f\leq 0.96$, using algebraic methodology. Very recently Yu and Feng~\cite{YF2026} showed that $\hat{\delta}^*_f\leq 0.92$, extending previous work of Delcourt and Postle~\cite{DP2021progress} on the non-partite case. Indeed, much more is known in the non-partite setting which corresponds to the famous Nash-Williams conjecture~\cite{Nash1970}. The conjecture states that every $K_3$-divisible graph on $n$ vertices for $n$ sufficiently large with minimum degree $3n/4$ admits a $K_3$-decomposition (in the non-partite setting, a graph $G$ is $K_3$-divisible if $|E(G)|$ is divisible by $3$, and every vertex has even degree, the natural necessary conditions under which such a graph $G$ could hope to have a $K_3$-decomposition). The Nash-Williams conjecture has garnered a lot of attention and was widely regarded as one of the most central open problems in extremal design theory, until it was very recently resolved in astounding work by Delcourt and Postle~\cite{DP26NW}. Settling the problem asymptotically relied on determining the fractional decomposition threshold, whilst proving the conjecture in full was achieved via a lengthy stability analysis and separate treatment of the extremal cases. However, the threshold for a fractional $K_3$-decomposition in the non-partite setting (denoted here $\delta^*_f$) does not imply anything about $\hat\delta^*_f$, the partite degree threshold. As observed by Bowditch and Dukes~\cite{BD2019} for the triangle case, and by Montgomery~\cite{M2019} for general fractional $K_r$-decompositions, determining an upper bound on $\hat\delta^*_f$ is at least as difficult as obtaining the same threshold bound for the non-partite case. Indeed, recall that the \emph{tensor product} of two graphs $G_1$ and $G_2$, denoted $G_1 \times G_2$, is the graph with $V(G_1\times G_2) = V(G_1) \times V(G_2)$ where two vertices $(v_1, v_2)$ and $(u_1, u_2)$ are adjacent if and only if $v_1u_1 \in E(G_1)$ and $v_2u_2 \in E(G_2)$. Notice that the graph $G\times K_3$ is a tripartite graph with parts each of size $v(G)$ and that $\hat{\delta}(G\times K_3) = \delta(G)$. Moreover, $G\times K_3$ is partite-$K_3$-divisible for any $G$ by construction. Finally, we note that $G\times K_3$ admits a fractional $K_3$-decomposition if and only if $G$ does; the backward direction follows by using the same weight for a copy of $K_3$ in $G\times K_3$ as the weight of its original copy in $G$, while the forward direction follows by using as the weight of a $K_3$ in $G$ the average of all its partite copies in $G\times K_3$. In particular, we know that $\hat\delta^*_f\geq \delta^*_{f}\geq \frac34$; this lower bound comes from an editorial remark by Graham accompanying the original article of Nash-Williams~\cite{Nash1970}. Although the remark concerned $K_3$-decompositions, a simple folklore counting argument shows that the construction provided admits no fractional $K_3$-decomposition.

Nevertheless, in this manuscript, we make a significant improvement on the upper bound for $\hat\delta_f^*$, adapting ideas coming from the work of Delcourt and Postle~\cite{DP26NW}. Our main result bridges the gap between the upper and lower bounds considerably, reducing the previous upper bound of $0.92$ to under $0.769$.

\begin{thm}\label{thm:main}
There exists $\eta>0$ such that the following holds for every positive integer $n$. Let $G$ be a $K_3$-divisible spanning subgraph of $K_{n,n,n}$ with $\hat{\delta}(G) \geq (0.769-\eta)n$. Then $G$ admits a fractional $K_3$-decomposition. That is,
    \(\hat\delta^*_f< 0.769.\)
\end{thm}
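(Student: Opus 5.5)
We will follow the Delcourt--Postle strategy for fractional triangle decompositions, adapted to the tripartite setting. The standard reduction is to start from the ``uniform'' weighting $\phi_0$, which puts the same weight on every triangle of $G$ through a given edge in a normalised way. Concretely, give each triangle $T\in\mc{T}(G)$ an initial weight depending on its edges, for instance $\phi_0(T)=\sum_{e\in T}\frac{1}{3t(e)}$ or simply $1/(c n)$. Here $t(e)$ is the number of triangles containing $e$; since $\hat\delta(G)\ge (0.769-\eta)n$, we have $t(e)\ge (2\cdot 0.769-1)n$. The edge sums $\sum_{T\ni e}\phi_0(T)$ then deviate from $1$, and we must correct these deviations. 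We would correct them with ``gadgets'': signed combinations of triangles supported on small configurations (for an edge $uv$ and a pair of extra vertices $w\in V_k$, $x$, the usual $K_4$-minus / octahedral switchers adapted to the partite structure). A gadget changes the weight on a single edge, or on a pair of edges at a vertex, while leaving all other edge sums unchanged. The partite analogue requires switchers whose vertices respect the parts, so the basic gadget for edge $uv$ with $u\in V_1$, $v\in V_2$ uses vertices $w,w'\in V_3$ and $u'\in V_1$, $v'\in V_2$, i.e.\ a copy of $K_{2,2,2}$ through $uv$. The number of such rooted copies is $\Omega(n^4)$ by the degree condition, so we can spread each demand over many gadgets and keep the total negative contribution to any triangle small compared with its initial weight.

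The key steps in order are as follows.
\begin{enumerate}
\item Define the demand $\dem(e)=1-\sum_{T\ni e}\phi_0(T)$. Show that $K_3$-divisibility makes the demands at each vertex balanced between its two neighbouring parts, i.e.\ $\sum_{u\in V_j}\dem(vu)=\sum_{u\in V_h}\dem(vu)$. This balancedness is what lets the demands be routed, and it is exactly the property absent from the non-partite argument.
\item Following Delcourt--Postle, recolour edges by whether their demand is positive (blue) or negative (red), and run a discharging procedure. Each red edge pushes its excess demand to neighbouring blue edges along triangles, via the configurations $W^+,W^-_u,W^-_v$, so that after discharging every edge has a demand that is small in absolute value, of order $o(1)$ relative to $1/n$.
\item Fix the residual, now tiny and balanced, demands with the $K_{2,2,2}$-gadgets spread uniformly. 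Then check that every triangle retains nonnegative weight.
\end{enumerate}

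The constant $0.769$ will come out of optimising the discharging rules. We would parametrise the transfer weights and the fraction of demand sent through each triangle type, reduce nonnegativity of every triangle's final weight to a finite family of inequalities in $\delta=\hat\delta(G)/n$ and the local densities, and verify numerically that these inequalities are feasible for $\delta\ge0.769-\eta$.

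The main obstacle is step 2 together with the balancedness requirement. Discharging in the partite setting can destroy the equality of demands between the two neighbouring parts of a vertex. If that happens, the final $K_{2,2,2}$-correction cannot absorb the residual demands. To handle this, we would first try to design each discharging move to be symmetric: every transfer along a triangle $uvw$ is paired with an equal transfer at the other two edges incident to the same vertex, in the other part. That would make balancedness an invariant of the procedure. If that pairing costs too much in the inequalities, we would instead add a separate rebalancing phase before the final correction, using $2$-paths $v u w$ with $u,w$ in different parts.
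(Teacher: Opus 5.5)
Your proposal has a genuine gap: step 2, which carries all the difficulty, is not a well-defined operation, and the constant $0.769$ is never derived.

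You set up a \emph{primal} scheme: an initial triangle weighting $\phi_0$ whose edge demands are then corrected. The Delcourt--Postle discharging you want to import, however, lives on the \emph{dual} side. In a primal weighting you cannot move demand from a red edge $uv$ to a blue edge $vw$ along the triangle $uvw$. Changing the weight of $uvw$ changes the demand on all three of its edges, so any edge-to-edge transfer already requires a switcher. The real problem is then keeping every triangle weight nonnegative while making these corrections at density $0.769$. Your plan defers exactly this to ``verify numerically'', without stating a single inequality or any mechanism making the residual demands small.

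Your balancedness worry also signals the mismatch. For any triangle weighting, $\sum_{u\in V_j}\dem(vu)=d(v,V_j)-\sum_{T\ni v}\phi_0(T)$. Hence $K_3$-divisibility makes demand sums balanced automatically, and no genuine modification of triangle weights can destroy this. The balancedness a partite argument actually needs is a different property, and it is not automatic.

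The missing idea is to argue by contradiction through Farkas' lemma. If $G$ has no fractional $K_3$-decomposition, there is an edge-weighting $\phi$ with every triangle sum nonnegative and total sum negative. One then discharges $\phi$ itself, not a triangle weighting, until every edge is nonnegative, which contradicts the negative total.

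The partite-specific ingredient is to make $\phi$ balanced in the sense that each vertex has the same \emph{number} $d_u$ of negative edges into each of the other two parts. To achieve this:
\begin{itemize}
\item Translate to $\phi_p(uv)=\phi(uv)-p(u)+p(v)$ for $u\in V_i$, $v\in V_{i+1}$. This preserves triangle sums and, by $K_3$-divisibility, the total.
\item Choose $p$ to minimise $\sum_e\abs{\phi_p(e)}$. This yields the cut inequalities $\zout(S)\ge\nin(S)-\nout(S)$.
\item Use max-flow/min-cut to decide which zero-weight edges to treat as negative, then apply a small perturbation.
\end{itemize}

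With common red degrees, each positive edge splits its weight between its endpoints (non-uniformly, via $f$). It divides each share among the red edges of its quasi-positive and quasi-negative triangles at that endpoint, and never sends more than it has. A negative edge $uv$ then receives at least $|\phi(uv)|\,g_d(d_u/n,d_v/n)$, where $\hat\delta(G)\ge(1-d)n$. This uses three bounds:
\begin{itemize}
\item $|W^-_u|\ge(d_u-dn)^+$ and $|W^-_v|\ge(d_v-dn)^+$, which is exactly where equal red degree into both parts is needed;
\item $|W^+|+|W^-_u|+|W^-_v|\ge(1-2d)n$.
\end{itemize}
The theorem thus reduces to the explicit two-variable inequality $g_{0.231}(x,y)\ge 1$ for a piecewise-linear $f$, and that is where $0.769$ comes from. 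No decomposition is ever constructed, which is why the triangle-positivity bottleneck of your approach never arises.
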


Returning back to the setting of Latin squares, we immediately obtain the following corollary.
\begin{cor}\label{cor:completionLS}
    Every partial Latin square of sufficiently large order $n$ in which every row, column, and symbol occurs at most $0.231n$ times can be completed to a Latin square.
\end{cor}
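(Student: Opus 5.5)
We derive this corollary from \cref{thm:main} together with the absorption result of Barber, K\"uhn, Lo, Osthus and Taylor (the theorem quoted from \cite{BKLOT2017}), using the translation between partial Latin squares and tripartite graphs from the introduction. Let $P$ be a partial Latin square of order $n$ in which every row, column and symbol occurs at most $0.231n$ times. Let $V_1,V_2,V_3$ be the sets of rows, columns and symbols, each of size $n$. Let $H\subseteq K_{n,n,n}$ be the graph consisting of the edge-disjoint triangles $\{r,c,s\}$, one for each filled cell $(r,c)$ containing symbol $s$. These triangles are edge-disjoint because $P$ is a partial Latin square: a given row--column pair is filled at most once, and a given row--symbol or column--symbol pair appears at most once. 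Set $G:=K_{n,n,n}-E(H)$. A $K_3$-decomposition of $G$ is exactly a completion of $P$.

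\textbf{Step 1: divisibility.} Consider $v\in V_i$. Each triangle of $H$ through $v$ uses exactly one edge from $v$ to each of the other two parts. Hence $d_H(v,V_j)=d_H(v,V_h)=t(v)$, where $t(v)$ is the number of entries of $P$ involving $v$. It follows that $d_G(v,V_j)=n-t(v)=d_G(v,V_h)$, so $G$ is $K_3$-divisible.

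\textbf{Step 2: degree.} By hypothesis $t(v)\le 0.231n$ for every vertex $v$. So $\hat\delta(G)\ge (1-0.231)n=0.769n$.

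\textbf{Step 3: apply the theorems.} Let $\eta>0$ be the constant from \cref{thm:main}. That theorem says every $K_3$-divisible spanning subgraph of $K_{n,n,n}$ with $\hat\delta\ge(0.769-\eta)n$ has a fractional $K_3$-decomposition. By the definition of $\hat\delta^*_f$ as an infimum, this gives $\hat\delta^*_f\le 0.769-\eta$. Now take $\varepsilon:=\eta/2$ in the theorem from \cite{BKLOT2017}. For $n$ sufficiently large, every $K_3$-divisible spanning subgraph $G$ of $K_{n,n,n}$ with
\[\hat\delta(G)\ge(\hat\delta^*_f+\varepsilon)n\]
has a $K_3$-decomposition. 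Since
\[(\hat\delta^*_f+\varepsilon)n\le(0.769-\eta/2)n\le 0.769n\le\hat\delta(G),\]
our $G$ has a $K_3$-decomposition, and therefore $P$ is completable.

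The only real subtlety is Step 3. The absorption theorem needs strictly more than the fractional threshold, and a bound of exactly $0.769n$ would not be enough on its own. This is why the slack $\eta$ built into \cref{thm:main} matters: it lets us choose $\varepsilon$ so that the degree condition holds with room to spare. Everything else is direct bookkeeping.
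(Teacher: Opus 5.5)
Your proof is correct and follows the same route as the paper, which calls the corollary immediate: you translate $P$ into the $K_3$-divisible graph $K_{n,n,n}-E(H)$ with $\hat\delta\ge 0.769n$, and then combine \cref{thm:main} (which gives $\hat\delta^*_f\le 0.769-\eta$) with the absorption theorem of \cite{BKLOT2017}. You also correctly identify that the slack $\eta$ is what allows a positive $\varepsilon$, which the absorption theorem requires.
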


Barber, K\"{u}hn, Lo, Osthus and Taylor~\cite{BKLOT2017} studied $\hat\delta^\eta$, the threshold for approximate decomposition: an $\eta$-approximate $K_3$-decomposition of $G$ is a set of edge-disjoint copies of $K_3$ covering all but at most $\eta n^2$ edges of $G$. For each $\eta>0$, let $\hat\delta^\eta$ denote the infimum over all $c$ such that every $K_3$-divisible tripartite graph $G$ with vertex partition $(V_1,V_2,V_3)$ satisfying $|V_1|=|V_2|=|V_3|=n$ with $n$ sufficiently large and $\hat{\delta}(G) \geq cn$ admits an $\eta$-approximate $K_3$-decomposition. A seminal result by Haxell and R\"odl~\cite{HR01} implies that $\hat\delta^*_f\geq\hat\delta^\eta$ for every $\eta>0$. Barber, K\"{u}hn, Lo, Osthus and Taylor then showed that $\hat\delta^\eta\geq\frac34$ for every $\eta>0$, thus providing an alternative proof of the known lower bound $\hat\delta^*_f\geq \frac34$. Moreover, Wanless~\cite{W2002} gave an explicit construction of a partial Latin square which is not completable, where every row, column, and symbol appears at most $n/4+1$ times, witnessing $\hat\delta^*\geq\frac34$. In~\cref{sec:LowerBound}, we provide a similar construction with a proof adapted for fractional $K_3$-decompositions. Define a {\em $k$-plex} to be a partial Latin square of order $n$ containing $kn$ entries such that every row, column, and symbol is used exactly $k$ times.

\begin{prop}\label{prop:LowerBound}
    For every $n\geq 8$ divisible by $4$, there exists a partial Latin square of order $n$, obtained from an $n/4$-plex by adding one single entry, such that its complement in $K_{n,n,n}$ admits no fractional $K_3$-decomposition.
\end{prop}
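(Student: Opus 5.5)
The plan is to exhibit an explicit $n/4$-plex built from two "half" blocks, and then add one entry that breaks a linear counting invariant satisfied by every fractional Latin square. Write $m=n/2$ (so $m/2=n/4$ is an integer and $m\ge 4$). Split the rows into $A\cup\bar A$, the columns into $B\cup\bar B$ and the symbols into $S\cup\bar S$, each part of size $m$. Identify $A,B,S$ with $\mathbb{Z}_m$, and likewise $\bar A,\bar B,\bar S$. Let $P_0$ consist of the following cells:
\begin{itemize}
\item the cells $(i,j)\in A\times B$ with $j-i\in\{0,\dots,m/2-1\}$, filled with symbol $i+j\in S$;
\item the analogous cells of $\bar A\times\bar B$, filled with the corresponding symbols from $\bar S$.
\end{itemize}
Each row, column and symbol is then used exactly $m/2=n/4$ times, so $P_0$ is an $n/4$-plex. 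Let $P=P_0\cup\{(0,m/2,m/2)\}$, i.e. row $0\in A$, column $m/2\in B$, symbol $m/2\in S$. We check that $P$ is a partial Latin square. Row $0$ contains only the symbols $0,\dots,m/2-1$. Column $m/2$ contains the symbols $i+m/2$ for $i=1,\dots,m/2$, i.e. $m/2+1,\dots,m-1,0$. Hence symbol $m/2$ is free in both, and the cell $(0,m/2)$ is empty because $m/2-0\notin\{0,\dots,m/2-1\}$. The complement $G$ of $P$ in $K_{n,n,n}$ is $K_3$-divisible, since $P$ is itself an edge-disjoint union of triangles.

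Suppose for contradiction that $\phi$ is a fractional $K_3$-decomposition of $G$. Define $w:\text{rows}\times\text{cols}\times\text{symbols}\to\mathbb{R}_{\ge0}$ by $w(r,c,s)=1$ if $(r,c,s)\in P$, and $w(r,c,s)=\phi(rcs)$ if $rcs$ is a triangle of $G$, and $0$ otherwise. Since every edge of $K_{n,n,n}$ lies either in exactly one triangle of $P$ or in $G$, every line sum of $w$ equals $1$. That is, $\sum_s w(r,c,s)=\sum_c w(r,c,s)=\sum_r w(r,c,s)=1$, so $w$ is a fractional Latin square.

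For $X\subseteq$ rows, $Y\subseteq$ cols and $Z\subseteq$ symbols, let $w(X,Y,Z)$ denote the total weight, and set $N=w(A,B,S)$. We derive three identities from the line sums:
\begin{itemize}
\item summing over rows in $A$ and symbols in $S$ gives $w(A,\bar B,S)=m^2-N$;
\item summing over columns in $B$ and symbols in $S$ gives $w(\bar A,B,S)=m^2-N$;
\item summing over columns in $\bar B$ and symbols in $S$ gives $w(\bar A,\bar B,S)=m^2-w(A,\bar B,S)=N$.
\end{itemize}

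Now compare with the prescribed entries. The cells of $P$ in $A\times B$ with symbols in $S$ contribute weight $m^2/2+1$, so $N\ge m^2/2+1$. On the other hand, $m^2/2$ cells of $\bar A\times\bar B$ carry symbols from $\bar S$ with weight $1$. Each cell has total weight $1$ over all symbols, so $w(\bar A,\bar B,S)\le m^2-m^2/2=m^2/2$. This gives $N\le m^2/2$, a contradiction, so no fractional $K_3$-decomposition of $G$ exists.

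The main point needing care is the bookkeeping of the linear identities. They must be derived from the line-sum constraints, and these hold for $w$ precisely because $P$ together with $\phi$ covers every edge of $K_{n,n,n}$ exactly once. The rest of the argument consists of routine checks: that the cyclic construction really is an $n/4$-plex, and that the extra entry is legal.
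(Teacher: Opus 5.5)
\textbf{Gap: $P_0$ is not an $n/4$-plex when $n\equiv 4 \pmod 8$.} Your claim that $P_0$ is an $n/4$-plex is false whenever $m/2=n/4$ is odd, because the symbol condition fails. Since $m$ is even, the map $(i,j)\mapsto(j-i,\,i+j)$ on $\mathbb{Z}_m^2$ is $2$-to-$1$ onto pairs of equal parity. So symbol $s$ occurs $2\cdot|\{t\in\{0,\dots,m/2-1\}: t\equiv s \pmod 2\}|$ times. This is $m/2$ when $m/2$ is even, but it is $m/2+1$ for even $s$ and $m/2-1$ for odd $s$ when $m/2$ is odd. For example, with $n=12$ and $m=6$, symbol $0$ sits in cells $(0,0),(2,4),(3,3),(5,1)$, while symbol $1$ sits only in $(0,1),(3,4)$.

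This cannot be fixed by choosing different cells of the same cyclic square $i+j \bmod m$. Summing $i+j-s\equiv 0 \pmod m$ over the $km$ cells of any $k$-plex inside it gives $k\,m(m-1)/2\equiv km/2\equiv 0 \pmod m$, which forces $k$ to be even. So your explicit construction proves the proposition only for $n\equiv 0\pmod 8$.

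\textbf{What is sound, and the fix.} The rest of your argument is correct. The line-sum identities give $w(\bar A,\bar B,S)=w(A,B,S)$, and the bounds $N\ge m^2/2+1$ and $w(\bar A,\bar B,S)\le m^2/2$ hold. This primal count is the counterpart of the paper's dual certificate: weight $+2$ on internal edges and $-1$ on crossing edges gives total $0$, and then $-6$ after deleting one internal triangle. Your version has the advantage of not needing Farkas' lemma.

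For $n\equiv 4\pmod 8$ you need another source for the block on $(A,B,S)$: an $n/4$-plex of order $n/2$ that admits at least one legal extra entry in $A\times B$ with a symbol from $S$. The paper obtains this from Wanless's theorem that completable $k$-plexes of order $m$ exist for all $0\le k\le m$. Any cell of the completion outside the plex supplies the extra entry, and the block on $(\bar A,\bar B,\bar S)$ can be an arbitrary $n/4$-plex. With that substitution your counting argument goes through unchanged.
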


\paragraph{Additional note:} Independently, around the same time, He and Cheng~\cite{HeCheng} announced an upper bound of $0.8$ for $\hat\delta^*_f$. Their proof is notably different from ours, although they also use the dual linear programme obtained from Farkas' lemma. They also note that their methodology breaks at~$0.8$.

\paragraph{Organisation:}
In~\cref{sec:overview} we give an overview of our proof strategy, and discuss its connection to Delcourt and Postle's work~\cite{DP26NW} on the Nash-Williams conjecture, whilst highlighting the obstacles to using their methodology, and how we overcome them. The starting point of our approach is similar to that in~\cite{DP26NW}: we apply Farkas' lemma to pass to the dual setting, reducing the problem to showing that any assignment of weights to the edges for which every triangle has positive total weight must itself have positive total weight. \cref{s:balanced} is dedicated to our main structural ingredient: we show that such edge-weightings can be modified to have some additional desirable properties, most notably that each vertex has equal numbers of negative edges to the other two parts. Moreover, this modification can be obtained with only a small change of the total weight over all edges.
In~\cref{sec:main}, we introduce the non-uniform proportional discharging method, and use it to prove our main result,~\cref{thm:main}, by redistributing the weights among the edges in our edge-weighting so that each edge becomes positive. \cref{sec:LowerBound} is dedicated to the proof of~\cref{prop:LowerBound}. In~\cref{sec:FutureWork}, we discuss potential improvement routes for~\cref{thm:main} and related future research directions.

\section{Proof Overview}\label{sec:overview}
	
	Throughout this section, $G$ will be a $K_3$-divisible tripartite graph with vertex partition $(V_1, V_2, V_3)$ where $|V_1|=|V_2|=|V_3|=n$. We will take the indices of these sets modulo $3$, so that $V_4=V_1$ and $V_0=V_3$.

    The fractional $K_3$-decomposition problem can be stated as a linear programme, which makes it amenable to Farkas' lemma~\cite{MR1580578}. This allows us to work in the dual setting instead, where weights are assigned to the edges instead of the triangles. To introduce our main tool, we first need some definitions on edge-weight functions.
	
	We say that an edge-weight function (which we also refer to as an edge-weighting) $\phi:E(G)\to\mathbb{R}$ is:
	\begin{itemize}
		\item {\em totally-negative} if the total weight over all edges of $G$ is strictly negative, that is, \[\sum_{e\in E(G)}\phi(e)< 0.\]
		\item {\em triangle-covered} if every triangle has non-negative total weight, that is,\[\sum_{e\in T}\phi(e)\geq 0,\text{ for all }T\in \mc{T}(G).\]
		\item {\em balanced} if every vertex has the same number of negative edges into each part, that is, for every $i\in[3]$ and $v\in V_i$, we have
		\[\size*{\set*{u\in V_{i-1}\colon \phi(uv)<0}}=\size*{\set*{u\in V_{i+1}\colon \phi(uv)<0}}.\]
		\item {\em untied} if no two edges have the same weight, that is, 
		\[\phi(e)\neq \phi(f)\text{ for all }e\neq f\in E(G).\]
		\item {\em non-zero} if $\phi(e)\neq 0$ for all $e\in E(G)$.
	\end{itemize}
	
	We are now ready to state a version of Farkas' lemma adapted to our setting, due to Delcourt and Postle~\cite[Lemma 2.2]{DP26NW}.
	
	\begin{lem}[Farkas’ lemma applied to $K_3$-Decompositions]\label{lem:NoFrac} A graph $G$ does not admit a fractional $K_3$-decomposition if and only if there exists a triangle-covered and totally-negative edge-weighting of $G$. 
	\end{lem}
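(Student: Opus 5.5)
We treat the existence of a fractional $K_3$-decomposition as the feasibility of a linear system and apply a standard form of Farkas' lemma. Let $M$ be the $E(G)\times\mc{T}(G)$ incidence matrix, with $M_{e,T}=1$ if $e\in T$ and $0$ otherwise. A fractional $K_3$-decomposition is exactly a vector $x\in\mathbb{R}^{\mc{T}(G)}$ with $x\geq 0$ and $Mx=\mathbf{1}$, where $\mathbf{1}$ is the all-ones vector indexed by $E(G)$. We use the version of Farkas' lemma for equality-constrained systems with nonnegative variables: exactly one of the following holds.
\begin{enumerate}
\item There exists $x\geq 0$ with $Mx=b$.
\item There exists $y$ with $M^{\top}y\geq 0$ and $b^{\top}y<0$.
\end{enumerate}

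We interpret the alternative with $b=\mathbf{1}$ and $y=\phi\in\mathbb{R}^{E(G)}$. The $T$-th coordinate of $M^{\top}\phi$ is $\sum_{e\in T}\phi(e)$, so the condition $M^{\top}\phi\geq 0$ says precisely that $\phi$ is triangle-covered. The scalar $\mathbf{1}^{\top}\phi$ equals $\sum_{e\in E(G)}\phi(e)$, so the condition $\mathbf{1}^{\top}\phi<0$ says precisely that $\phi$ is totally-negative. The lemma is therefore a direct transcription of Farkas' lemma.

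For completeness, we check the easy direction by hand, since it is the one used later. Suppose a fractional decomposition $x$ and a triangle-covered, totally-negative $\phi$ both exist. Then
\[0>\sum_e\phi(e)=\sum_e\phi(e)\sum_{T\ni e}x(T)=\sum_T x(T)\sum_{e\in T}\phi(e)\geq 0,\]
which is a contradiction. The reverse direction is the nontrivial half of Farkas' lemma: it follows from separating the point $\mathbf{1}$ from the closed convex cone $\{Mx: x\geq 0\}$, which is closed because it is finitely generated.

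There is no real obstacle here. The only care needed is to use the equality-with-nonnegativity form of Farkas' lemma, so that $\phi$ may take arbitrary real values, including negative ones. We also note the degenerate case in which $G$ has edges but no triangles: the condition $M^{\top}\phi\geq 0$ is then vacuous, and $\phi\equiv -1$ is triangle-covered and totally-negative, which is consistent with the lemma.
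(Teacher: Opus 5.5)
Your proof is correct and matches the paper's approach: the paper does not reprove this lemma but cites it from Delcourt and Postle as Farkas' lemma applied to the edge--triangle incidence system $Mx=\mathbf{1}$, $x\geq 0$, which is exactly your transcription. One small correction to your commentary: the main theorem, via Corollary~\ref{lem:FarkasExtended}, relies on the nontrivial direction (no fractional decomposition implies that a triangle-covered, totally-negative weighting exists), whereas the easy direction you checked by hand is the one used only in the lower-bound construction of Section~\ref{sec:LowerBound}.
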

	
	Suppose that $G$ satisfies the minimum degree condition in \cref{thm:main} and also suppose, for a contradiction, that $G$ does not admit a fractional $K_3$-decomposition. By \cref{lem:NoFrac}, there exists a triangle-covered and totally-negative edge-weighting of $G$. The first step in our proof is to prove that there exists such an edge-weighting of $G$ that in addition is balanced, untied, and non-zero. This is the main result of \cref{s:balanced}. Given such an edge-weighting $\phi$, we then use the method of discharging to obtain a contradiction. Explicitly, we design a set of rules called discharging rules that creates a new edge-weighting from $\phi$ by moving the edge weights around, but not changing the overall weight. We then prove that after applying the discharging, each edge has positive weight, contradicting the totally-negative property. The application of the discharging method is done in \cref{sec:main}. The balanced property of $\phi$ is crucial for our discharging rule.

    The approach detailed above is heavily inspired by the recent resolution of the Nash-Williams conjecture by Delcourt and Postle~\cite{DP26NW}. In their solution of the fractional Nash-Williams conjecture, they began with an edge-weighting from \cref{lem:NoFrac} and then used this to prove that there exists a non-zero, untied, triangle-covered, totally-negative edge-weighting before using a discharging method to reach a contradiction. 
    The main new structural ingredient in our argument is the additional balancedness property, which is specific to our partite setting and was introduced to overcome the difficulties arising from the partite structure of the problem.
    Our discharging method also originates in the first-order proportional discharging developed by Delcourt and Postle in their work on the Nash-Williams conjecture~\cite{DP26NW}, and explained to us in more detail through personal communication. In its simplest form, this rule splits the weight of a positive edge equally between its two endpoints and distributes each half uniformly among the incident negative edges. We retain this proportional framework, but allow the split between the two endpoints to depend on the local configuration. In fact, the original uniform rule, combined with our balancedness property, already yields a substantial improvement on the previously known partite threshold; the non-uniform split is used to obtain the stronger bound proved here.
    Delcourt and Postle ultimately use a substantially more involved discharging rule to resolve the Nash-Williams problem.  
    We explored adapting it to the partite setting, but several new difficulties arise, and in particular we found partite configurations for which their rule does not seem to handle the partite structure well enough, we refer the reader to~\cref{sec:FutureWork} for additional details. Instead, our approach retains the proportional framework and exploits the additional balancedness of the edge-weighting. Thus this balancedness is the main new structural ingredient in our proof: it is obtained by translating the edge-weighting to minimise a given norm, and then using a max-flow/min-cut argument.

\section{Balanced weights}\label{s:balanced}

Throughout this section, let $G$ be a $K_3$-divisible tripartite graph with vertex partition $(V_1, V_2, V_3)$ where $|V_1|=|V_2|=|V_3|=n$ and where the indices of these sets are taken modulo $3$.
	The goal of this section is to prove \cref{prop:balanced} below, which states that for every triangle-covered edge-weighting $\phi$ of a $K_3$-divisible tripartite graph $G$, there exists a triangle-covered, balanced, untied, non-zero edge-weighting $\psi$ that is arbitrarily close to $\phi$ in terms of their sum over all the edges of $G$. For simplicity, we will refer to an edge as blue or red, respectively, depending on whether its weight is positive or negative.	
	The first step is to prove that there is a triangle-covered edge-weighting $\varphi$ that is arbitrarily close to $\phi$ and has the property that we can label each edge with weight $0$ as red or blue so that $\varphi$ is ``red-balanced'', meaning that for each $i \in \{1, 2, 3\}$ and each $v \in V_i$, the number of red edges incident to $v$ with an endpoint in $V_{i-1}$ is equal to the number of red edges incident to $v$ with an endpoint in $V_{i+1}$. This is done in \cref{lem:redbalanced}.
	We then apply a small perturbation to $\varphi$ to yield the desired edge-weighting $\psi$. We note that for our purposes, the edge-weighting $\varphi$ would have sufficed. However, the extra conditions of being untied and non-zero were critical for the proof of the Nash-Williams conjecture and may be useful for improving our results.

	\begin{prop}\label{prop:balanced}
		Let $\phi:E(G)\to\mathbb{R}$ be a triangle-covered edge-weighting of $G$. For every $\varepsilon>0$, there exists a triangle-covered, balanced, untied, and nonzero edge-weighting $\psi:E(G)\to\mathbb{R}$ such that $\abs*{\sum_{e\in E(G)}\phi(e)-\sum_{e\in E(G)}\psi(e)}\leq\varepsilon$. 
	\end{prop}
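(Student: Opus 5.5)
The plan is to exploit a family of \emph{vertex translations} that leave every triangle sum and the total weight unchanged. Orient every edge of $G$ from $V_i$ to $V_{i+1}$. For $x\colon V(G)\to\mathbb{R}$, define $\phi_x(uv):=\phi(uv)+x_u-x_v$ for $u\in V_i$, $v\in V_{i+1}$.

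\textbf{Step 1: translations are harmless.} A triangle $v_1v_2v_3$ with $v_j\in V_j$ has one out-edge and one in-edge at each of its vertices, so its weight is unchanged. The change in total weight is
\[\sum_{v\in V_i} x_v\bigl(d(v,V_{i+1})-d(v,V_{i-1})\bigr)=0\]
by $K_3$-divisibility. Hence every $\phi_x$ is triangle-covered with the same total weight as $\phi$.

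\textbf{Step 2: a strict slack and a minimising translation.}
\begin{enumerate}
\item Replace $\phi$ by $\phi+\gamma$ for a tiny $\gamma>0$ with $\gamma\,e(G)\le\varepsilon/2$. Every triangle now has weight at least $3\gamma$, and translations preserve this.
\item Choose $x$ minimising $F(x)=\sum_e\abs{\phi_x(e)}$. This is a convex piecewise-linear function bounded below by $0$, so its minimum is attained.
\item The optimality condition $0\in\partial F(x)$ gives reals $s_e\in[-1,1]$ on the zero edges of $\phi_x$ such that, for every vertex $v$, with $\sigma_e=\operatorname{sign}\phi_x(e)$ for non-zero edges and $\sigma_e=s_e$ for zero edges,
\[\sum_{e\text{ out of }v}\sigma_e-\sum_{e\text{ into }v}\sigma_e=0.\]
\end{enumerate}

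\textbf{Step 3: integral labelling (the main obstacle).} We must upgrade the fractional $s_e$ to values in $\{-1,1\}$. Write $s_e=2t_e-1$ with $t_e\in[0,1]$. The condition becomes a flow constraint on the zero-edge subgraph,
\[\sum_{\text{out}}t_e-\sum_{\text{in}}t_e=b_v,\]
where $b_v$ is determined by the non-zero edge signs and the zero-edge counts at $v$. Since $v$ has equally many out- and in-edges, the sum $\sum_{\text{out}}\sigma_e-\sum_{\text{in}}\sigma_e$ over $\pm1$ values is even, so $b_v$ is an integer.

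The constraint matrix is the incidence matrix of a directed graph, which is totally unimodular. The polytope $\{t\in[0,1]^{E_0}\colon Mt=b\}$ is non-empty, so it has an integral vertex, i.e.\ some $t\in\{0,1\}^{E_0}$. Equivalently, this is the max-flow/min-cut integrality theorem.

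Label a zero edge red if $t_e=0$ and blue if $t_e=1$. Then at each $v$, (\#blue out) $-$ (\#red out) $=$ (\#blue in) $-$ (\#red in). Because out-degree equals in-degree, this forces \#red out $=$ \#red in, i.e.\ the labelling is red-balanced.

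\textbf{Step 4: perturbation.} Fix $0<\mu$ much smaller than $\gamma$, smaller than every $\abs{\phi_x(e)}\neq0$, and with $\mu\,e(G)\le\varepsilon/4$.
\begin{enumerate}
\item Set red zero edges to $-\mu$ and blue zero edges to $+\mu$.
\item Add to every edge a distinct perturbation of absolute value less than $\mu/10$. Choose these generically so that all weights become pairwise distinct, and small enough that no non-zero edge changes sign.
\end{enumerate}
The resulting $\psi$ is non-zero and untied, and its sign pattern equals the labelling, so it is balanced. Each triangle changes by less than $4\mu<3\gamma$, so $\psi$ stays triangle-covered. The total weight moved by at most $\gamma e(G)+2\mu e(G)\le\varepsilon$.
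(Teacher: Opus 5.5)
Your proof is correct, and its first half follows the paper's route. You translate to minimise the $\ell_1$-norm, then use network-flow integrality to colour the zero edges so that the red set is balanced. You obtain the fractional flow constraint at every vertex directly from $0\in\partial F(x)$ and round it using total unimodularity. The paper instead perturbs along $\pm\varepsilon\mathbf{1}_S$ to extract only the cut inequalities $\zout(S)\ge w(S)$, and then invokes integral max-flow/min-cut. These are two dual presentations of the same step.

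The genuine difference is the endgame. The paper first has to make the red set triangle-free: it takes a red-balanced colouring with the fewest red edges and recolours any all-zero red triangle blue. It then shifts blue edges by $+\varepsilon/(2e(G))$ and red edges by $-\varepsilon/(4e(G))$, and unties by an ordered stretching by powers $\nu^i$. Both steps are designed so that no triangle sum ever decreases, which is exactly where the triangle-free property is needed.

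Your preliminary shift $\phi\mapsto\phi+\gamma$ gives every triangle a slack of $3\gamma$, and translations preserve it. Consequently any perturbation of size below $3\gamma/4$ per edge keeps the weighting triangle-covered, including a generic one used for untying. This removes the triangle-free argument and the ordered stretching entirely, at the harmless cost of an extra $\gamma e(G)$ change in total weight.

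The paper's version does yield a somewhat stronger structural lemma: a red-balanced colouring sandwiched between $\{\phi_p<0\}$ and $\{\phi_p\le 0\}$ with no red triangle, perturbed without lowering any triangle sum. For the proposition as stated, however, your route is shorter and more robust.
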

	
	To obtain the edge-weighting $\varphi$ described above, the idea is to {\em translate} $\phi$ according to the following definition.
	
	\begin{definition}
		Let $\phi:E(G)\to\mathbb{R}$ be an edge-weighting of $G$ and let $p:V(G)\to\mathbb{R}$. We denote by $\phi_p$ the {\em translation} of $\phi$ by $p$, defined by
		\[\phi_p(uv) := \phi(uv)-p(u)+p(v),\]
		for every $uv\in G$ such that $u\in V_i,v\in V_{i+1}$ for some $i\in[3]$.
	\end{definition}
	
	We first prove that any translation of a triangle-covered edge-weighting remains triangle-covered whilst maintaining the sum of the edge weights.
	
	\begin{lem}\label{lem:translate}
		If $\phi$ is a triangle-covered edge-weighting of $G$, then for every $p:V(G)\to\mathbb{R}$ the edge-weighting $\phi_p$ is also triangle-covered with $\sum_{e\in E(G)}\phi(e)=\sum_{e\in E(G)}\phi_p(e)$.
	\end{lem}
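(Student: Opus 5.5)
Both claims come straight from the definition of $\phi_p$. The idea is that the translation adds a ``potential difference'' along each edge. Around any triangle these differences telescope to zero, and summed over all edges they cancel because of $K_3$-divisibility.

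\emph{Triangle weights are unchanged.} Since $G$ is tripartite, every triangle $T\in\mc{T}(G)$ has exactly one vertex in each part. Write its vertices as $v_1\in V_1$, $v_2\in V_2$, $v_3\in V_3$. Orienting each edge from $V_i$ to $V_{i+1}$ (indices mod $3$), the edges of $T$ are $v_1v_2$, $v_2v_3$ and $v_3v_1$. Therefore
\[\sum_{e\in T}\phi_p(e)=\sum_{e\in T}\phi(e)+\bigl(-p(v_1)+p(v_2)\bigr)+\bigl(-p(v_2)+p(v_3)\bigr)+\bigl(-p(v_3)+p(v_1)\bigr)=\sum_{e\in T}\phi(e).\]
This is non-negative because $\phi$ is triangle-covered, so $\phi_p$ is triangle-covered as well.

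\emph{The total weight is unchanged.} Summing over all edges gives
\[\sum_{e\in E(G)}\phi_p(e)-\sum_{e\in E(G)}\phi(e)=\sum_{v\in V(G)}p(v)\,c(v),\]
where $c(v)$ is the net contribution of $v$. Fix $v\in V_i$. The vertex $v$ is the ``head'' $v$ (contributing $+p(v)$) of each edge $uv$ with $u\in V_{i-1}$. It is the ``tail'' $u$ (contributing $-p(v)$) of each edge $vw$ with $w\in V_{i+1}$. Hence $c(v)=d(v,V_{i-1})-d(v,V_{i+1})$, and this equals $0$ by $K_3$-divisibility of $G$. So the two totals agree.

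The only point needing care is the orientation bookkeeping: the sign convention in the definition of $\phi_p$ must be applied consistently to the edge $v_3v_1$, where $v_3\in V_3$ and $v_1\in V_4=V_1$. Beyond that, the argument is a direct computation, and I expect no real obstacle.
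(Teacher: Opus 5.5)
Your proposal is correct and follows essentially the same route as the paper. The paper likewise writes the change in total weight as $\sum_{i}\sum_{v\in V_i}p(v)\bigl(d(v,V_{i-1})-d(v,V_{i+1})\bigr)$, which vanishes by $K_3$-divisibility, and shows that the potential terms around each triangle $v_1v_2v_3$ telescope to zero.
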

	
	\begin{proof}
		Fix $p:V(G)\to\mathbb{R}$. Observe that 
		\[\sum_{e\in E(G)}\phi_p(e) = \sum_{e\in E(G)}\phi(e) + \sum_{i=1}^3\sum_{v\in V_i} p(v)\cdot\brackets*{d(v,V_{i-1})-d(v,V_{i+1})}.\]
		As $G$ is $K_3$-divisible, we obtain that
		\[\sum_{e\in E(G)}\phi_p(e) = \sum_{e\in E(G)}\phi(e).\]
		Now, for a given triangle $T=v_1v_2v_3\in\mc{T}(G)$, with $v_i\in V_i$ for each $i\in[3]$, 
		\begin{align*}
			\sum_{e\in T}\phi_p(e)  
			=& \big[\phi(v_1v_2)-p(v_1)+p(v_2)\big]
			+ \big[\phi(v_2v_3)-p(v_2)+p(v_3)\big]
			+ \big[\phi(v_3v_1)-p(v_3)+p(v_1)\big]\\
			=&\sum_{e\in T}\phi(e)\geq 0,
		\end{align*}
		as desired.
	\end{proof}
	
	We next prove that for some choice of the function $p$ and some red/blue colouring of the edges of weight $0$ in $E(G)$, the edge-weighting $\phi_p$ is triangle-covered and the red-balanced property is satisfied.
	
	\begin{lem}\label{lem:redbalanced}
		If $\phi$ is a triangle-covered edge-weighting of $G$, then there exists $p:V(G)\to\mathbb{R}$ and a red/blue colouring $R\cup B$ of $E(G)$ such that $R$ is triangle-free, 
		\begin{equation}\label{e:cond1}
		\{e\colon \phi_p(e)<0\}\subseteq R\subseteq \{e\colon \phi_p(e)\leq 0\},
		\end{equation} 
		and the ``red-balanced'' property is satisfied, meaning that for each $i\in[3]$ and $v\in V_i$,
		\[\size*{\set{u\in V_{i-1}\colon uv\in R}}=\size*{\set{u\in V_{i+1}\colon uv\in R}}.\]
	\end{lem}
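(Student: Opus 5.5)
We plan to obtain both $p$ and $R$ from a single linear programme and its dual, and then round the dual solution using integrality of flows. Orient every edge $uv$ with $u\in V_i$, $v\in V_{i+1}$ as $u\to v$. With this orientation, $\phi_p(uv)=\phi(uv)-p(u)+p(v)$. The red-balanced condition at $v\in V_i$ says that the number of red edges entering $v$ (from $V_{i-1}$) equals the number leaving $v$ (to $V_{i+1}$). In other words, the indicator vector of $R$ must be a circulation in this orientation of $G$.

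\textbf{Step 1 (primal--dual pair).} Consider the LP in variables $p:V(G)\to\mathbb{R}$ and $s:E(G)\to\mathbb{R}$:
\[
\min \sum_{e} s_e \quad\text{s.t.}\quad s_{uv}\ge -\phi(uv)+p(u)-p(v),\ \ s_{uv}\ge 0\ \text{ for all } uv\in E(G).
\]
At an optimum, $s_e=\max(0,-\phi_p(e))$, so the LP minimises the total negative weight of a translate. It is feasible and bounded below by $0$, so it has an optimal solution $(p,s)$. Its dual is
\[
\max \sum_e g_e\,(-\phi(e))\quad\text{s.t.}\quad 0\le g_e\le 1,
\]
subject to the constraint that $g$ is a circulation. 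Indeed, the dual constraint coming from the free variable $p(v)$ is exactly flow conservation at $v$. By strong duality there is an optimal dual $g$. Complementary slackness then gives:
\begin{itemize}
\item if $\phi_p(e)<0$, then $s_e>0$ and $s_e=-\phi_p(e)$, so the constraint $s_e\ge 0$ is slack and hence $g_e=1$;
\item if $\phi_p(e)>0$, then $s_e=0>-\phi_p(e)$, so the other primal constraint is slack and hence $g_e=0$.
\end{itemize}

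\textbf{Step 2 (integrality).} The set of circulations $g$ with $g_e=1$ on $\{\phi_p<0\}$, $g_e=0$ on $\{\phi_p>0\}$, and $0\le g_e\le 1$ on $\{\phi_p=0\}$ is a polytope defined by a network matrix with integral bounds. It is nonempty by Step 1. By the integrality theorem for circulations (Hoffman), it therefore contains an integral point $g\in\{0,1\}^{E}$. Set $R=\{e: g_e=1\}$ and $B=E(G)\setminus R$. Then \eqref{e:cond1} holds, and conservation of $g$ is precisely the red-balanced property.

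\textbf{Step 3 (triangle-freeness).} Among all integral circulations from Step 2, choose one with $|R|$ minimal. Suppose $R$ contains a triangle $v_1v_2v_3$. All three edges have $\phi_p\le 0$, while $\phi_p$ is triangle-covered by \cref{lem:translate}, so all three weights are $0$. In our orientation the triangle is the directed cycle $v_1\to v_2\to v_3\to v_1$. Subtracting its indicator from $g$ gives another circulation. This new circulation still respects the bounds, since the three edges have weight $0$ and so may take value $0$. It has smaller support, which contradicts minimality. Hence $R$ is triangle-free.

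The main point needing care is Step 1, namely correctly identifying the dual LP and the complementary slackness conditions with the orientation conventions of the translation. Once that is in place, the rounding in Step 2 is standard flow integrality, and Step 3 uses only \cref{lem:translate}.
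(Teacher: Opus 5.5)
Your proof is correct, and it takes a genuinely different route from the paper's, although the two reach the same intermediate object.

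\textbf{The paper's route.} The paper chooses $p$ to minimise the $\ell_1$-norm $\sum_e|\phi_p(e)|$, with existence of the minimiser coming from a compactness argument. It then perturbs the minimiser by $\varepsilon\mathbf{1}_S$ and uses $K_3$-divisibility to extract the cut inequality $\zout(S)\ge \nin(S)-\nout(S)$ for every $S\subseteq V(G)$. Finally it builds an auxiliary source--sink network and applies integral max-flow/min-cut to decide which zero-weight edges become red.

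That cut inequality is exactly Hoffman's feasibility condition for a circulation with the following bounds: value $1$ on negative edges, value $0$ on positive edges, and values in $[0,1]$ on zero edges. So the paper is in effect verifying and then realising, by hand, the feasibility that you get directly from complementary slackness.

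\textbf{What your route buys.} Your objective, the total negative part, agrees with the paper's up to an additive constant when $G$ is $K_3$-divisible. This is because $|a|=a+2\max(0,-a)$ and $\sum_e\phi_p(e)$ is then invariant under translation. However, your argument never uses divisibility. With the negative-part objective, the first-order perturbation computation gives the cut condition directly. So your proof actually establishes the lemma for any tripartite $G$, since \cref{lem:translate} gives triangle-coveredness of $\phi_p$ without divisibility. It is also shorter: LP duality plus total unimodularity of the node--arc incidence matrix replace both the perturbation computation and the network construction. (The matrix you call a ``network matrix'' is more precisely this incidence matrix, but the conclusion is the same.)

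\textbf{What the paper's route buys.} It is more elementary and self-contained. It needs only compactness and max-flow/min-cut, not strong duality and complementary slackness, and it makes the cut condition explicit.

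Your Step 3 is the same minimal-counterexample argument the paper uses for triangle-freeness.
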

	
    \begin{nonlateproof}{lem:redbalanced}
		For a triangle-covered edge-weighting $\phi$ of $G$, define $\Phi(\phi)$ to be the family of all $\phi_p$ with $p : V(G) \to \mathbb{R}$. Also define $\mathbb V = \Phi(\mathbf{0})$ where $\mathbf{0} : E(G) \to \mathbb{R}$ denotes the $0$ function and note that $\mathbb V$ is a subspace of $\mathbb{R}^{E(G)}$.
		Now fix a triangle-covered edge-weighting $\phi$ of $G$. Since $\Phi(\phi)$ is a coset of the subspace $\mathbb V$, it is closed. Define $L : \Phi(\phi) \to \mathbb{R}$ by
		\[
		L(\phi_p) = \sum_{e \in E(G)} |\phi_p(e)|.
		\]        
        Fix some edge-weighting $\psi_0\in\Phi(\phi)$, and let 
        $\mathcal{K}:=\set*{\psi\in\Phi(\phi)\colon L(\psi)\leq L(\psi_0)}$.
        Observe that the set $\mathcal{K}$ is non-empty, closed, and bounded since for every $e\in G$ and every $\psi\in \mathcal{K}$ we have
        $|\psi(e)|\leq L(\psi)\leq L(\psi_0)$, and therefore $\mathcal{K}\subseteq [-L(\psi_0),L(\psi_0)]^{E(G)}$. It follows that $\mathcal{K}$ is compact.
        As $L$ is a continuous function, it attains its minimum on $\mathcal{K}$, which is also its minimum on $\Phi(\phi)$ by definition of $\mathcal{K}$. Thus we may set 
		\[\hat\phi:=\argmin\{L(\phi_p)\colon \phi_p\in \Phi(\phi)\}.\]
		
		We now have our candidate choice for the function $p$ in the lemma statement and so it remains to determine the red/blue colouring of $E(G)$. Define
		\begin{align*}
			&R_1 = \{e \in E(G) : \hat\phi(e) < 0\} \text{ and}\\
			&B_1 = \{e \in E(G) : \hat\phi(e) > 0\}.
		\end{align*}
		Our final red/blue colouring will satisfy $R_1 \subseteq R$ and $B_1 \subseteq B$, so it just remains to determine the colour of the edges that have zero weight in $\hat\phi$. We must set the colour of these edges so that we achieve the red-balanced property. 
		
		Define a relation $\prec$ on $V(G)$ where $u \prec v$ if $u \in V_i$ and $v \in V_{i+1}$ for some $i \in \{1, 2, 3\}$. Let $S\subseteq V(G)$ and define
		\begin{align*}
			&\Zout(S) = \{uv \in E(G) : u \prec v, u \in S, v \notin S, \hat\phi(uv) = 0\} \text{ and } \zout(S) = |\Zout(S)|,\\
			&\Zin(S) = \{uv \in E(G) : u \prec v, u \notin S, v \in S, \hat\phi(uv) = 0\} \text{ and } \zin(S) = |\Zin(S)|,\\
			&\Pout(S) = \{uv \in E(G) : u \prec v, u \in S, v \notin S, \hat\phi(uv) > 0\} \text{ and } \pout(S) = |\Pout(S)|,\\
			&\Pin(S) = \{uv \in E(G) : u \prec v, u \notin S, v \in S, \hat\phi(uv) > 0\} \text{ and } \pin(S) = |\Pin(S)|,\\
			&\Nout(S) = \{uv \in E(G) : u \prec v, u \in S, v \notin S, \hat\phi(uv) < 0\} \text{ and } \nout(S) = |\Nout(S)|,\\
			&\Nin(S) = \{uv \in E(G) : u \prec v, u \notin S, v \in S, \hat\phi(uv) < 0\} \text{ and } \nin(S) = |\Nin(S)|.\\
		\end{align*}
		So every edge of $G$ that has exactly one endpoint in $S$ lies in one of the above defined sets. In the case where $S = \{v\}$ is a singleton, then we will use $\Zout(v)$ instead of $\Zout(\{v\})$ and similarly for the other functions defined above. 
        We now fix $\varepsilon \in \mathbb{R}$ so that $|\varepsilon| < |\hat\phi(e)|$ for every $e\in E(G)$ with $\hat\phi(e)\neq0$, and define $p : V(G) \to \mathbb{R}$ by 
		\[
		p(v) = \begin{cases}
			\epsilon & \text{if } v \in S,\\
			0 & \text{otherwise}.
		\end{cases}
		\]
        Then 

        \begin{equation}\label{e:phi_change}
        |\hat\phi_p(uv)| = \begin{cases}
            |\hat\phi(uv)|+\eps & \text{if } \eps>0 \text{ and } uv \in \Zout(S)\cup\Zin(S)\cup\Pin(S)\cup\Nout(S),\\  
            |\hat\phi(uv)|-\eps & \text{if } \eps>0 \text{ and } uv \in \Pout(S)\cup\Nin(S),\\[.5em]       
            |\hat\phi(uv)|-\eps & \text{if } \eps<0 \text{ and } uv \in \Zout(S)\cup\Zin(S)\cup\Pout(S)\cup\Nin(S), \\
            |\hat\phi(uv)|+\eps & \text{if } \eps<0 \text{ and } uv \in \Pin(S)\cup\Nout(S).\\
        \end{cases}
        \end{equation}

        Here we have crucially used the fact that $|\eps|<|\hat\phi(e)|$ for every $e\in E(G)$ with $\hat\phi(e)\neq0$. Suppose that $\eps>0$. Then

        \newlength{\myphantomwidth}
        \settowidth{\myphantomwidth}{$\displaystyle
        =\sum_{\substack{uv \in E(G)\\\{u, v\} \cap S = \emptyset}}
         |\hat\phi_p(uv)|
        +\sum_{\substack{uv \in E(G)\\\{u, v\} \subseteq S}}
         |\hat\phi_p(uv)|
        $}

        \begin{align*}
    		L(\hat\phi_p) 
            &=\sum_{\substack{uv \in E(G)\\\{u, v\} \cap S = \emptyset}} |\hat\phi_p(uv)| 
            + \sum_{\substack{uv \in E(G)\\\{u, v\} \subseteq S}} |\hat\phi_p(uv)| 
            + \sum_{e \in \Zout(S)} |\hat\phi_p(e)|
            + \sum_{e \in \Nout(S)} |\hat\phi_p(e)|
            + \sum_{e \in \Pout(S)} |\hat\phi_p(e)|\\
            &\hspace{\myphantomwidth}
            + \sum_{e \in \Zin(S)} |\hat\phi_p(e)|
            + \sum_{e \in \Nin(S)} |\hat\phi_p(e)|
            + \sum_{e \in \Pin(S)} |\hat\phi_p(e)|\\
    		&=\sum_{\substack{uv \in E(G)\\\{u, v\} \cap S = \emptyset}} |\hat\phi(uv)|
            + \sum_{\substack{uv \in E(G)\\\{u, v\} \subseteq S}} |\hat\phi(uv)| 
            + \sum_{e \in \Zout(S)} \parens*{|\hat\phi(e)| + \epsilon}
            + \sum_{e \in \Nout(S)} \parens*{|\hat\phi(e)| + \epsilon}
            + \sum_{e \in \Pout(S)} \parens*{|\hat\phi(e)| - \epsilon}\\   
            &\hspace{\myphantomwidth}
            + \sum_{e \in \Zin(S)} \parens*{|\hat\phi(e)| + \epsilon}
            + \sum_{e \in \Nin(S)} \parens*{|\hat\phi(e)| - \epsilon}
            + \sum_{e \in \Pin(S)} \parens*{|\hat\phi(e)| + \epsilon}\\
    		&= L(\hat\phi) + \epsilon(\zout(S)+\zin(S)-\pout(S)+\pin(S)+\nout(S)-\nin(S)).
		\end{align*}

		By minimality of $\hat\phi$, we have $L(\hat\phi_p)\geq L(\hat\phi)$, hence 
		\begin{equation}\label{e:min_ineq}
		\zout(S)+\zin(S)-\pout(S)+\pin(S)+\nout(S)-\nin(S) \geq 0.
		\end{equation}
		Also, the $K_3$-divisibility of $G$ implies that
		\begin{equation}\label{e:divis_eq}
		\zout(S)+\pout(S)+\nout(S)=\zin(S)+\pin(S)+\nin(S).
		\end{equation}
		Combining \cref{e:min_ineq} and \cref{e:divis_eq}, we obtain that
		\begin{equation}\label{e:zout_prelim}
		\zout(S) \geq \nin(S)-\nout(S).
		\end{equation}
        Now suppose that $\eps<0$. By using \cref{e:phi_change} and running the above argument, we obtain
    	\begin{equation}\label{e:zin_prelim}
    		\zin(S) \geq \nout(S)-\nin(S).
    	\end{equation}
		
		For $v \in V(G)$, let $w(v) = n_{\textnormal{in}}(v)-n_{\textnormal{out}}(v)$ and observe that
		\begin{align*}
		w(S) :&= \sum_{v \in S} w(v) \\
		&= \sum_{v \in S} ((|\Nin(v) \cap S|+|\Nin(v) \cap (V(G) \setminus S)|)-(|\Nout(v) \cap S|+|\Nout(v) \cap (V(G) \setminus S)|)) \\
		&= \sum_{v \in S} |\Nin(v) \cap S| - \sum_{v \in S} |\Nout(v) \cap S| + \sum_{v \in S} |\Nin(v) \cap (V(G) \setminus S)| - \sum_{v \in S}|\Nout(v) \cap (V(G) \setminus S)| \\
		&= \nin(S)-\nout(S).
		\end{align*}
	Combining this with \cref{e:zout_prelim}, we obtain
	\begin{equation}\label{e:zout}
		\zout(S) \geq w(S).
	\end{equation}

	\definecolor{quantumblue}{RGB}{0,90,255}
	\definecolor{quantumviolet}{RGB}{135,40,255}
	\definecolor{quantumred}{RGB}{255,25,70}
	
	\newcommand{\quantumlogo}{%
		\tikz[baseline=(Q.base)]{%
			
			\foreach \dx/\dy in {
				-1.2/0, 1.2/0, 0/-1.2, 0/1.2,
				-.9/-.9, -.9/.9, .9/-.9, .9/.9,
				-1.8/0, 1.8/0, 0/-1.8, 0/1.8
			}{
				\node[
				inner sep=0pt,
				opacity=.10,
				text=quantumviolet,
				xshift=\dx pt,
				yshift=\dy pt
				] at (0,0)
				{\sffamily\bfseries\Large QUANTUM};
			}
			
			\node[inner sep=0pt] (Q) at (0,0) {%
				\sffamily\bfseries\Large
				\textcolor{quantumblue}{Q}%
				\textcolor{blue!65!quantumviolet}{U}%
				\textcolor{blue!30!quantumviolet}{A}%
				\textcolor{quantumviolet}{N}%
				\textcolor{quantumviolet!65!quantumred}{T}%
				\textcolor{quantumviolet!30!quantumred}{U}%
				\textcolor{quantumred}{M}%
			};
			
			\node[
			text=quantumblue,
			font=\scriptsize
			] at ([xshift=-2pt,yshift=4pt]Q.north west) {$\star$};
			
			\node[
			text=quantumviolet,
			font=\tiny
			] at ([xshift=5pt,yshift=5pt]Q.north) {$\star$};
			
			\node[
			text=quantumred,
			font=\scriptsize
			] at ([xshift=2pt,yshift=4pt]Q.north east) {$\star$};
			
			\node[
			text=quantumviolet,
			font=\tiny
			] at ([xshift=4pt,yshift=-3pt]Q.south east) {$\star$};
		}%
	}

	Recall that our goal is to determine the colours of the edges of zero weight to achieve the red-balanced property. Let $v \in V(G)$. If $\nin(v) > \nout(v)$, we need to colour red at least $\nin(v)-\nout(v)$ edges $uv$ of zero weight with $v \prec u$. Similarly, if $\nout(v) > \nin(v)$, then we need to colour red at least $\nout(v)-\nin(v)$ edges $uv$ of zero weight with $u \prec v$. By \cref{e:zout_prelim} and \cref{e:zin_prelim}, we know that we can achieve this locally at each vertex. However, a greedy approach may lead to a single zero edge being coloured both red and blue simultaneously, creating a ``quantum" 
    edge. To overcome this issue, we use network flows.
	
	Let $Z = \{e \in E(G) : \hat\phi(e) = 0\}$. Let $D$ be a digraph with vertex set $V(G) \cup \{s, t\}$, where $s$ and $t$ will be the source and sink vertices, respectively. For each edge $uv \in Z$ with $u \prec v$, we add the arc $(u, v)$ to $D$ with capacity $1$. For each $v \in V(G)$ with $w(v) > 0$, we add the arc $(s, v)$ to $D$ with capacity $w(v)$. For each $v \in V(G)$ with $w(v) < 0$, we add the arc $(v, t)$ to $D$ with capacity $-w(v)$. 
	
		\begin{claim}\label{cl:flow}
			There exists an integral $s$-$t$ flow $f$ in $D$ with capacity
			\[C:=\sum_{v\in V(G)}\max(w(v),0)=\sum_{v\in V(G)}\max(-w(v),0).\]
			Both $s$ and $t$ are saturated by $f$.
		\end{claim}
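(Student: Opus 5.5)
The plan is to apply the max-flow/min-cut theorem to $D$. First I will check that the two expressions for $C$ agree. Every negative edge $uv$ with $u\prec v$ contributes $+1$ to $\nout(u)$ and $+1$ to $\nin(v)$. Hence
\[\sum_{v\in V(G)} w(v)=\sum_{v}\nin(v)-\sum_v\nout(v)=0,\]
so the positive and negative parts of $w$ have the same total, namely $C$. The arcs leaving $s$ have total capacity $C$, and so do the arcs entering $t$. Therefore any $s$-$t$ flow has value at most $C$, and a flow of value exactly $C$ saturates every arc at $s$ and at $t$. Since all capacities are integers, the integral flow theorem lets us take a maximum flow that is integral. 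It therefore suffices to show that every $s$-$t$ cut has capacity at least $C$.

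Take an arbitrary cut $(\{s\}\cup S,\{t\}\cup \overline{S})$, where $S\subseteq V(G)$ and $\overline S=V(G)\setminus S$. Its capacity consists of three kinds of arcs.
\begin{itemize}
\item Arcs $(s,v)$ with $v\in\overline S$ and $w(v)>0$, contributing $\sum_{v\in \overline S}\max(w(v),0)$.
\item Arcs $(v,t)$ with $v\in S$ and $w(v)<0$, contributing $\sum_{v\in S}\max(-w(v),0)$.
\item Arcs $(u,v)$ coming from zero edges with $u\in S$, $v\notin S$ and $u\prec v$. These are exactly the edges of $\Zout(S)$, contributing $\zout(S)$.
\end{itemize}
We need this total to be at least $C=\sum_{v\in S}\max(w(v),0)+\sum_{v\in\overline S}\max(w(v),0)$. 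Cancelling the common sum over $\overline S$, this reduces to
\[\zout(S)\ \ge\ \sum_{v\in S}\bigl(\max(w(v),0)-\max(-w(v),0)\bigr)=\sum_{v\in S}w(v)=w(S).\]
This is precisely inequality \cref{e:zout}, which was established for every $S\subseteq V(G)$ using the minimality of $\hat\phi$ together with $K_3$-divisibility. So the minimum cut capacity is at least $C$, and max-flow/min-cut gives a flow of value $C$.

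There is no real obstacle here. The substantive work, the cut inequality $\zout(S)\ge w(S)$ for all $S$, was already done in \cref{e:zout}. The only care needed is to identify the cut capacity correctly. The zero-edge arcs crossing from the $t$-side to the $s$-side correspond to $\Zin(S)$ and are not counted in a cut, so only $\Zout(S)$ appears.
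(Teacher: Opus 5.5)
Your proposal is correct and follows the paper's proof essentially step for step: compute the capacity of an arbitrary cut $\{s\}\cup S$ as $C - w(S) + \zout(S)$, invoke \cref{e:zout}, and finish with integral max-flow/min-cut together with the fact that the arcs at $s$ have total capacity $C$. Your explicit check that $\sum_{v} w(v)=0$ is a small detail the paper leaves implicit; it justifies both the equality of the two expressions for $C$ and the saturation of $t$.
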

		\begin{proofclaim}
			Let $S' = S \cup \{s\}$ be an $s$-$t$ cut of $D$. The capacity $C(S')$ satisfies: 
			\begin{align*}
				C(S')&=\sum_{v\not\in S' \cup \{t\}}\max(w(v),0) + \sum_{v\in S}\max(-w(v),0) + \zout(S)\\
				&= \sum_{v\in V(G)}\max(w(v),0)  - \sum_{v\in S}\max(w(v),0) + \sum_{v\in S}\max(-w(v),0) + \zout(S)\\
				&= \sum_{v\in V(G)}\max(w(v),0)  - \sum_{v\in S}w(v) + \zout(S)\\
				&= C  - w(S) + \zout(S)\\
				&\geq C,
			\end{align*}
			by~\cref{e:zout}. By the integral max-flow min-cut theorem, there exists an integral flow from $s$ to $t$ with capacity at least $C$. Since the sum of the capacities of the arcs incident to $s$ is equal to $C$, any flow has capacity at most $C$. The claim follows.
		\end{proofclaim}
	
		Let $f$ be a flow of $D$ as in \cref{cl:flow} and let $R_0$ be the set of edges in $Z$ whose corresponding arcs have flow value $1$ in $f$. Set $\hat R = R_0 \cup R_1$ and $\hat B = E(G) \setminus \hat R$. By construction, 
		\[
		R_1 \subseteq \hat R \subseteq R_1 \cup Z,
		\]
		so the colouring $\hat R \cup \hat B$ satisfies \cref{e:cond1}. We next verify that the red-balanced property is satisfied. Let $v \in V(G)$ and first suppose that $w(v) > 0$. Let $f_v$ be the total flow into vertex $v$. Since all edges incident to $s$ are saturated by $f$ and the arc $(s, v)$ of $D$ has capacity $w(v)$, it follows that $f_v \geq w(v)$ and the number of edges $uv \in R_0$ with $u \prec v$ is $f_v-w(v)$. Since $(v, t)$ is not an arc of $D$, the number of edges $uv \in R_0$ with $v \prec u$ is $f_v$. Therefore,
        \begin{align*}
        &|\{uv \in R : u \prec v\}| = \nin(v)+|\{uv \in R_0 : u \prec v\}| = \nin(v)+f_v-w(v), \text{ and }\\
        &|\{uv \in R : v \prec u\}| = \nout(v)+|\{uv \in R_0 : v \prec u\}| = \nout(v)+f_v = \nin(v)+f_v-w(v),
        \end{align*}
        meaning that the red-balanced property is satisfied at $v$. By a similar argument, the red-balanced property is satisfied at $v$ if $w(v)<0$.

		Let $R \cup B$ be a colouring of $E(G)$ so that \cref{e:cond1} and the red-balanced property are satisfied with the smallest number of red edges. Note that such a colouring exists, since $\hat R \cup \hat B$ satisfies \cref{e:cond1} and the red-balanced property. We claim that $R$ is also triangle-free. If not, then there is a red triangle, say with edges $e_1, e_2, e_3$. Since by \cref{lem:translate} $\hat\phi$ is triangle-covered, it follows that $\hat\phi(e_i)=0$ for $i \in \{1, 2, 3\}$. Thus we can define $R' = R \setminus \{e_1, e_2, e_3\}$ and $B' = B \cup \{e_1, e_2, e_3\}$ and it is easy to check that this colouring satisfies \cref{e:cond1} and the red-balanced property with a smaller number of red edges than $R \cup B$, a contradiction. Thus, $R \cup B$ is also triangle-free, proving the lemma.

    \end{nonlateproof}
	
	We are now able to prove the main result for this section.
	
	\begin{proof}[Proof of~\cref{prop:balanced}]		
		By~\cref{lem:redbalanced} there exists $p:V(G)\to\mathbb{R}$ and a red/blue colouring $R\cup B$ of $E(G)$ such that $R$ is triangle-free, \cref{e:cond1} is satisfied, and the red-balanced property is satisfied.
		By~\cref{lem:translate}, $\phi_p$ is triangle-covered with $\sum_{e\in E(G)}\phi(e)=\sum_{e\in E(G)}\phi_p(e)$. 
		Let $\psi:E(G)\to \mathbb{R}$ be defined by
		\[\psi(e):=\begin{cases}
			\phi_p(e) + \frac{\varepsilon}{2e(G)}&\text{ if }e\in B,\\
			\phi_p(e) - \frac{\varepsilon}{4e(G)}&\text{ if }e\in R.
		\end{cases}\]
		
		As a consequence of \cref{e:cond1} and the red-balanced property, $\psi$ is balanced and nonzero. As $R$ is triangle-free, every triangle $T$ of $G$ contains at least one blue edge and at most two red edges, therefore 
		\[\sum_{e\in T}\psi(e)\geq\sum_{e\in T}\phi_p(e)\geq 0,\]
		hence $\psi$ is triangle-covered. By definition of~$\psi$, 
		\[\sum_{e\in E(G)}\psi(e)
		\leq \sum_{e\in E(G)}\phi_p(e) + e(G)\cdot \frac{\varepsilon}{2e(G)}
		\leq \sum_{e\in E(G)}\phi_p(e)+\frac{\varepsilon}{2},\]
		and
		\[\sum_{e\in E(G)}\psi(e)
		\geq \sum_{e\in E(G)}\phi_p(e) - e(G)\cdot \frac{\varepsilon}{4e(G)}
		\geq \sum_{e\in E(G)}\phi_p(e)-\frac{\varepsilon}{4}.\]

		We now perturb $\psi$ again to obtain an untied version of the edge-weighting. We define an ordering of $E(G)=\{e_1,\ldots,e_m\}$, such that, for some $k\in [m-1]$, we have
		\begin{align*}
			\psi(e_i) \geq \psi(e_{i+1})>0&\text{ for all }i\in[k-1], \\
			\psi(e_i) \leq \psi(e_{i+1})<0&\text{ for all }i\in\{k+1,\ldots,m-1\},
		\end{align*}
		breaking ties arbitrarily. Informally, $E(G)$ is ordered with first all positive edges, from largest to smallest weights, and then all negative edges, from most negative to least negative. Let $\nu\in(0,1/2)$ be such that $\nu < \epsilon/(2e(G))$. Let $\psi'$ be defined by, for every $i\in[m]$,
		\[\psi'(e_i):= \psi(e_i)+{\rm sign}(\psi(e_i))\cdot\nu^i
		\]
		Observe that $\psi'$ can be viewed as a {\em stretching} of $\psi$ away from 0 over the real line, where the largest elements (in absolute value) are stretched the most, and where positive edges are stretched more than negative ones. We note first that $\psi'$ is non-zero and that $\psi'(e_i)>0$ if and only if $\psi(e_i)>0$, therefore $\psi'$ is balanced. Also,
        \begin{align*}
        &\psi'(e_i)-\psi'(e_{i+1}) = \psi(e_i)-\psi(e_{i+1})+\nu^i-\nu^{i+1}>0 \text{ for all } i \in [k-1], \\
        &\psi'(e_k)-\psi'(e_{k+1}) = \psi(e_k)-\psi(e_{k+1})+\nu^k+\nu^{k+1}>0, \text{ and} \\
        &\psi'(e_{i+1})-\psi'(e_{i}) = \psi(e_{i+1})-\psi(e_{i})+\nu^i-\nu^{i+1}>0 \text{ for all } i \in \{k+1, \ldots, m-1\}, 
        \end{align*}
        meaning that $\psi'$ is untied. Let $T$ be a triangle of $G$ with edges $e_i$, $e_j$, $e_\ell$ with $i < j < \ell$. Since $R$ has no red triangle, $\psi(e_i) > 0$. Therefore, since $\psi$ is triangle-covered,
        \[
        \sum_{e \in T} \psi'(e) \geq \sum_{e \in T} \psi(e)+\nu^i-\nu^j-\nu^k \geq \sum_{e \in T} \psi(e)+\nu^i(1-\nu-\nu^2) \geq 0,
        \]
		hence $\psi'$ is triangle-covered. Finally, 
		\[\abs*{\sum_{e\in E(G)}\psi'(e)-\sum_{e\in E(G)}\phi(e)}
		\leq 
		\abs*{\sum_{e\in E(G)}\psi'(e)-\sum_{e\in E(G)}\psi(e)} + 
		\abs*{\sum_{e\in E(G)}\psi(e)-\sum_{e\in E(G)}\phi(e)}
		\leq \nu\cdot e(G) + \frac{\varepsilon}{2}
		\leq\varepsilon,\]
		since $\nu < \varepsilon/(2e(G))$.
	\end{proof}
	
	\cref{lem:NoFrac} and \cref{prop:balanced} together yield the following corollary. 
	
	\begin{cor}\label{lem:FarkasExtended} A $K_3$-divisible tripartite graph $G$ does not admit a fractional $K_3$-decomposition if and only if there exists a triangle-covered, totally-negative, balanced, untied and non-zero edge-weighting of $G$. 
	\end{cor}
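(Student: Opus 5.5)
The corollary has two directions, and each follows by combining \cref{lem:NoFrac} with \cref{prop:balanced}.

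\emph{Backward direction.} Suppose $G$ has a triangle-covered, totally-negative, balanced, untied and non-zero edge-weighting. In particular it has a triangle-covered, totally-negative one. The extra properties are simply discarded, and \cref{lem:NoFrac} gives that $G$ has no fractional $K_3$-decomposition. This direction does not need $K_3$-divisibility.

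\emph{Forward direction.} Suppose $G$ has no fractional $K_3$-decomposition.
\begin{enumerate}
\item By \cref{lem:NoFrac}, there is a triangle-covered edge-weighting $\phi$ of $G$ with $\sigma := \sum_{e\in E(G)}\phi(e) < 0$.
\item Set $\varepsilon := -\sigma/2 > 0$. Since $G$ is $K_3$-divisible, \cref{prop:balanced} applies and gives a triangle-covered, balanced, untied and non-zero edge-weighting $\psi$ with
\[
\Big|\sum_{e}\psi(e) - \sigma\Big| \le \varepsilon .
\]
\item Hence $\sum_{e\in E(G)}\psi(e) \le \sigma + \varepsilon = \sigma/2 < 0$, so $\psi$ is totally-negative as well as having all the other required properties.
\end{enumerate}

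The only point needing care is choosing $\varepsilon$ strictly smaller than $|\sigma|$, so that total negativity survives the perturbation. All the substantive work is already contained in \cref{prop:balanced}, so I do not expect any real obstacle here.
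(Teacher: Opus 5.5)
Your proof is correct and takes the same approach as the paper. The backward direction follows from \cref{lem:NoFrac}, and the forward direction applies \cref{prop:balanced} with $\varepsilon=-\sum_{e\in E(G)}\phi(e)/2$, so the resulting weighting stays totally-negative.
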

    \begin{proof}
        The backward direction is immediate from \cref{lem:NoFrac}. For the forward direction, suppose $G$ does not admit a fractional $K_3$-decomposition. Then by \cref{lem:NoFrac}, there exists a triangle-covered and totally-negative edge-weighting $\phi$ of $G$. Thus, by \cref{prop:balanced} with, say, $\varepsilon = -\sum_{e\in E(G)}\phi(e)/2$, there exists a triangle-covered, totally-negative, balanced, untied, and nonzero edge-weighting of $G$.
    \end{proof}

\section{Non-Uniform Proportional Discharging}\label{sec:main}

For simplicity, in this section, we denote by $(U,V,W)$ the vertex set of a tripartite graph $G$ instead of $(V_1,V_2,V_3)$. For such a graph $G$ and a given edge-weighting $\phi:E(G)\to\mathbb{R}$, we colour an edge $e$ of $G$ blue if $\phi(e)\geq 0$, and red otherwise. Furthermore, if $G$ is $K_3$-divisible and $\phi$ is balanced, we define for every vertex $u\in U$ its {\em red degree} as 
\[d_u:=\size*{\set*{v\in V\colon \phi(uv)<0}}=\size*{\set*{w\in W\colon \phi(uv)<0}},\]
and similarly for every $v\in V$ and $w\in W$.
We also label the triangles in $G$ as positive, quasi-positive, quasi-negative, or negative, depending on their colour pattern, as illustrated in~\cref{fig:NameTriangles}.
\begin{table}[H]
    \centering
    \begin{tabular}{cccc}
         \ColoredTriangle{1}{blue}{blue}{blue}&
         \ColoredTriangle{1}{red, dashed}{blue}{blue}&
         \ColoredTriangle{1}{blue}{red, dashed}{red, dashed}&
         \ColoredTriangle{1}{red, dashed}{red, dashed}{red, dashed}
         \\
         Positive & Quasi-positive & Quasi-negative & Negative \\\\
         \multicolumn{2}{c}{\ColoredEdge{1}{blue} Blue edges}&
         \multicolumn{2}{c}{\ColoredEdge{1}{red, dashed} Red edges}
    \end{tabular}
    \caption{Naming convention for triangles.}
    \label{fig:NameTriangles}
\end{table}

The starting point for our discharging rule is a first-order proportional discharging rule developed by Delcourt and Postle in their work on the Nash-Williams conjecture~\cite{DP26NW}, and explained to us in more detail through personal communication during the development of their article. The discharging rule defined hereafter retains this underlying mechanism, but allows the proportion of weight routed through each endpoint to depend on the local configuration. 

\begin{definition}[$f$-proportional discharging]\label{def:NonUniDischarging}
    For every positive integer $n$, let $G$ be a $K_3$-divisible tripartite graph with vertex set $(U,V,W)$ such that $|U|=|V|=|W|=n$. Let $\phi$ be a balanced edge-weighting of $G$, and let $f:[0,1]\to[0,1/2]$ be a non-increasing function. For every edge $e\in E(G)$, colour $e$ red if $\phi(e)<0$ and blue otherwise. For every blue edge $b=vw$, let $\lambda_b\in[0,1/2]$ be defined as
    \[\lambda_{b}=\begin{cases}
        \frac12&\text{ if }d_v=d_w,\\
        f\parens*{\frac1n\cdot\max\{d_v,d_w\}}&\text{ otherwise}.
    \end{cases}\]
    The {\em $f$-proportional discharging of $\phi$} is the following transfer of weights from every blue edge $b$ to its red neighbours (see also~\cref{tab:NonUniDischarging}). For each quasi-positive or quasi-negative triangle $T\in\mathcal{T}$ containing the blue edge $b=vw$ and the red edge $r=uv$, we transfer the weight ${\rm dis}^f_{\phi}(b,r)$ from $b$ to $r$ defined as 
    \[{\rm dis}^f_{\phi}(vw,uv):=
    \begin{cases}
        \dfrac{\lambda_b\cdot\phi(vw)}{d_v} &\text{ if $T$ is quasi-positive and }d_v\geq d_w,\\\\
        \dfrac{(1-\lambda_b)\cdot\phi(vw)}{d_v} &\text{ if $T$ is quasi-positive and }d_v\leq d_w,\\\\
        \dfrac{\lambda_b\cdot |\phi(uv)|}{\max\{d_v,d_w\}}+\dfrac{(1-\lambda_b)\cdot|\phi(uv)|}{\min\{d_v,d_w\}} &\text{ if $T$ is quasi-negative}.
    \end{cases}\]
    For simplicity, we extend the definition of ${\rm dis}^f_{\phi}$ to all pairs of edges by setting ${\rm dis}^f_{\phi}(e_1,e_2):=0$ if $\phi(e_1)<0$, $\phi(e_2)\geq0$, or $e_1,e_2$ are not in a common triangle.
    For a given red edge $r\in E(G)$, we define $\Delta_{\phi}^f(r)$ to be the total weight received by the edge $r$ during discharging, that is,
    \[\Delta_{\phi}^f(r):=\sum_{b\in E(G)}{\rm dis}^f_{\phi}(b,r).\]
\end{definition}

We note that the discharging is said to be {\em proportional} because each positive edge distributes its weight in proportion to the relevant red degrees, and {\em non-uniform} because these proportions vary with the local red degree configuration rather than being constant across all incident red edges; we initially worked on a {\em uniform} proportional discharging, defined by setting $\lambda_b=\frac12$ for all blue edges $b$.

\begin{table}[ht]
    \centering
    \begin{tabular}{>{\centering\arraybackslash}p{0.3\textwidth} |>{\centering\arraybackslash}p{0.3\textwidth} | >{\centering\arraybackslash}p{0.4\textwidth}}
    \multicolumn{2}{c|}{\textbf{$T$ is quasi-positive}}& \textbf{$T$ is quasi-negative}\\\hline
    \DischargingQuasiPos{1} & \DischargingQuasiPosBis{1} &\DischargingQuasiNeg{1} \\
    ${\rm dis}^{f}_{\phi}(b,r):=\dfrac{\lambda_b\cdot \phi(b)}{d_v}$     &
    ${\rm dis}^{f}_{\phi}(b,r):=\dfrac{(1-\lambda_b)\cdot\phi(b)}{d_w}$     
    & ${\rm dis}^{f}_{\phi}(b,r_i):=\dfrac{\lambda_b\cdot|\phi(r_i)|}{d_v}+\dfrac{(1-\lambda_b)\cdot|\phi(r_i)|}{d_w}$
    \end{tabular}
    \caption{Discharging rules when $d_v\geq d_w$}
    \label{tab:NonUniDischarging}
\end{table}

We further note that ${\rm dis}^f_{\phi}(vw,uv)$ in \cref{def:NonUniDischarging} is well-defined since when $d_v=d_w$, $\lambda_b = 1/2 = 1-\lambda_b$. We first prove that during discharging, a positive edge never sends more than its own weight to its neighbours.

\begin{prop}\label{prop:BlueStillPositive}
    Let $n$ be a positive integer, let $G$ be a $K_3$-divisible tripartite graph with vertex set $(U,V,W)$ with $|U|=|V|=|W|=n$, let $f:[0,1]\to[0,1/2]$ be a non-increasing function, and let $\phi:E(G)\to\mathbb{R}$ be a triangle-covered and balanced edge-weighting of $G$. For every edge $b$ such that $\phi(b)\geq 0$, we have
    \[\phi(b)-\sum_{r\in E(G)}{\rm dis}^f_{\phi}(b,r)\geq0.\]
\end{prop}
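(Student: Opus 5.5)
Fix a blue edge $b=vw$ with $v\in V$, $w\in W$ (the other part-pairs are symmetric), and write $\phi(b)\ge 0$. The only edges receiving weight from $b$ are red edges $r$ lying in a common triangle $T=uvw$ with $u\in U$, and $T$ is then quasi-positive (exactly one of $uv,uw$ red) or quasi-negative (both red). I split the neighbours $u\in U$ into three classes: $A_v$, those with $uv$ red and $uw$ blue; $A_w$, those with $uw$ red and $uv$ blue; and $A_{vw}$, those with both red. Since $\phi$ is balanced, $d_v=|A_v|+|A_{vw}|$ counts the red edges from $v$ into $U$, and $d_w=|A_w|+|A_{vw}|$ likewise counts those from $w$. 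Assume without loss of generality $d_v\ge d_w$; then $v$ carries the factor $\lambda_b$ and $w$ the factor $1-\lambda_b$. (If $d_v=0$ then $b$ sends nothing, so assume $d_v,d_w$ not both zero; if $d_w=0$ then $A_w=A_{vw}=\emptyset$ and only the quasi-positive terms through $v$ occur.)

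The key step is to bound each quasi-negative transfer by the corresponding quasi-positive amount. For $u\in A_{vw}$, triangle-coveredness gives $|\phi(uv)|+|\phi(uw)|\le \phi(vw)$. Hence the total sent to the two red edges of this triangle is
\[
\frac{\lambda_b(|\phi(uv)|+|\phi(uw)|)}{d_v}+\frac{(1-\lambda_b)(|\phi(uv)|+|\phi(uw)|)}{d_w}\le \frac{\lambda_b\phi(b)}{d_v}+\frac{(1-\lambda_b)\phi(b)}{d_w}.
\]
(I read the definition's $\max/\min$ denominators as $d_v,d_w$ under $d_v\ge d_w$.) Each $u\in A_v$ contributes exactly $\lambda_b\phi(b)/d_v$, and each $u\in A_w$ contributes $(1-\lambda_b)\phi(b)/d_w$.

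Summing, the total sent by $b$ is at most
\[
\lambda_b\phi(b)\frac{|A_v|+|A_{vw}|}{d_v}+(1-\lambda_b)\phi(b)\frac{|A_w|+|A_{vw}|}{d_w}=\lambda_b\phi(b)+(1-\lambda_b)\phi(b)=\phi(b),
\]
with the appropriate term dropped when $d_w=0$, where the sum is then at most $\lambda_b\phi(b)\le\phi(b)$. This gives the claim. The main point to get right is the bookkeeping: the balanced property is exactly what makes the red degree of $v$ towards $U$ equal to $d_v$, so the counts in each class telescope to $1$. Everything else is a direct application of triangle-coveredness to the quasi-negative triangles. The hypotheses that $f$ is non-increasing and $\lambda_b\le 1/2$ are not needed here; only $\lambda_b\in[0,1]$ is used.
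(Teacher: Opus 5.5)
This is essentially the paper's proof: you split the common $U$-neighbours of $v,w$ into quasi-positive triangles with the red edge at $v$, quasi-positive triangles with the red edge at $w$, and quasi-negative triangles, then use triangle-coveredness to bound each quasi-negative pair of transfers by $\lambda_b\phi(b)/d_v+(1-\lambda_b)\phi(b)/d_w$, and the coefficients sum to at most $1$. One small correction: $G$ need not be complete, so a red edge $uv$ with $u\notin N(w)$ lies in no triangle with $b$; hence you only have $|A_v|+|A_{vw}|\le d_v$ and $|A_w|+|A_{vw}|\le d_w$, and your final equality should be the inequality ``$\le\phi(b)$'' (valid since $\phi(b)\ge 0$), which is how the paper writes it, using $d_v-|\mathcal{T}^-_b|$ as an upper bound.
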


\begin{proof} Let $b=vw \in E(G)$ be such that $\phi(b)\geq 0$. Without loss of generality, assume that $d_v\geq d_w$, $v \in V$, and $w \in W$. Denote by $\mc{T}^-_b$ the family of quasi-negative triangles containing $b$. If $d_v+d_w=0$, then $b$ is in no quasi-negative and no quasi-positive triangles, hence $\sum_{r\in E(G)}{\rm dis}^f_{\phi}(b,r)=0$. If $d_v>d_w=0$, then there is no red edge incident with $b$ at $w$, and $\mc{T}^-_b=\emptyset$. Hence 
\[\sum_{r\in E(G)}{\rm dis}^f_{\phi}(b,r)\leq d_v\frac{\lambda_b\cdot\phi(b)}{d_v}\leq \phi(b).\]
Assume now that $d_v\geq d_w>0$. We have 
\begin{align*}
    \sum_{r\in E(G)}{\rm dis}^f_{\phi}(b,r)
 \leq 
 &\sum_{u\in U\colon uvw\in \mc{T}^-_b} \brackets*{
 \frac{\lambda_{b}\cdot|\phi(uv)|}{d_v}+\frac{(1-\lambda_{b})\cdot|\phi(uv)|}{d_w}
 +\frac{\lambda_{b}\cdot|\phi(uw)|}{d_v}+\frac{(1-\lambda_{b})\cdot|\phi(uw)|}{d_w}
 }\\
 &+ \parens*{d_v-|\mc{T}^-_b|}\cdot \frac{\lambda_{b}\cdot\phi(vw)}{d_v} 
 + \parens*{d_w-|\mc{T}^-_b|}\cdot \frac{(1-\lambda_{b})\cdot\phi(vw)}{d_w}\\
 \leq &\sum_{u\in U\colon uvw\in \mc{T}^-_b} \brackets*{
 \frac{\lambda_{b}\cdot\phi(vw)}{d_v}+\frac{(1-\lambda_{b})\cdot\phi(vw)}{d_w}
 }\\
 &+ \parens*{d_v-|\mc{T}^-_b|}\cdot \frac{\lambda_{b}\cdot\phi(vw)}{d_v}
 + \parens*{d_w-|\mc{T}^-_b|}\cdot \frac{(1-\lambda_{b})\cdot\phi(vw)}{d_w} \\
 = & \,\,  \phi(vw),
\end{align*}
as desired, where for the second inequality we have used the fact that $\phi$ is triangle-covered.
\end{proof}

Next, we lower bound the amount of weight received by a negative edge during discharging. For $a \in \mathbb{R}$, we define $a^+ := \max(a, 0)$ and for convenience, we extend the definition of the supremum so that the supremum of any function over the empty set is $0$. 

\begin{prop}\label{prop:dischargingvalue}
   Let $n$ be a positive integer and let $G$ be a $K_3$-divisible tripartite graph with vertex set $(U,V,W)$ with $|U|=|V|=|W|=n$ and $\hat\delta(G)\geq(1-d)n$ for some $d\in[0,1/2)$. Let $f:[0,1]\to[0,1/2]$ be a non-increasing function, and let $\phi$ be a nonzero, triangle-covered, and balanced edge-weighting of $G$. Then, for every edge $r=uv\in G$ with $\phi(r)<0$ and $d_u\geq d_v$, we have
    \[\frac{\Delta^f_{\phi}(r)}{|\phi(r)|}\geq 
    \brackets*{M(y,1-x)-\frac{f(x)}{x}}\cdot (x-d)^+
    +\brackets*{M(x,1-y)-\frac{f(x)}{x}}\cdot (y-d)^+
    +\frac{f(x)}{x}\cdot (1-2d),\]
where $x:=\frac{d_u}{n}$, $y:=\frac{d_v}{n}$ satisfy $0<y\leq x<1$ and $0<x+y\leq 1$, and
\begin{align*}
    M:(0,1)^2&\to\mathbb{R}\\
    (a,b) &\mapsto  \frac{1}{a}-\sup_{a<z\leq b}\brackets*{f(z)\cdot\parens*{\frac{1}{a}-\frac{1}{z}}}.
\end{align*}
\end{prop}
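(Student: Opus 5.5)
The plan is to fix the red edge $r=uv$ and classify the triangles $uvw$ through it, where $w$ ranges over the common neighbours of $u$ and $v$ in the third part. I take $u\in U$, $v\in V$, $w\in W$; the other placements are symmetric. Then I bound from below the contribution of each triangle type, and finally count how many triangles of each type there are.

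First I note that $\phi$, being triangle-covered, has no negative triangles. I also note the key disjointness fact: if $uw$ is red, then the red neighbourhoods of $u$ and of $w$ in $V$ are disjoint, since a common red neighbour $v'$ would span a negative triangle $uv'w$. By balancedness these neighbourhoods have sizes $d_u$ and $d_w$, so $d_u+d_w\le n$, i.e. $z:=d_w/n\le 1-x$. Likewise, if $vw$ is red then $z\le 1-y$, and $r$ itself gives $x+y\le 1$. Since $r$ is red, $d_u\ge d_v>0$, and $x<1$ follows from $x+y\le1$. Each $w$ in the common neighbourhood then falls into exactly one of three classes.

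\emph{Class A} ($uw$ red, $vw$ blue): $uvw$ is quasi-negative, and $b=vw$ sends $|\phi(r)|\,[\lambda_b/\max(d_v,d_w)+(1-\lambda_b)/\min(d_v,d_w)]$.
\begin{itemize}
\item If $d_w\le d_v$, this is at least $|\phi(r)|/d_v=|\phi(r)|/(yn)\ge |\phi(r)|M(y,1-x)/n$, because $M(a,b)\le 1/a$.
\item If $d_w>d_v$, then $\lambda_b=f(z)$ and the amount equals $\frac{|\phi(r)|}{n}\bigl[\frac1y-f(z)(\frac1y-\frac1z)\bigr]$ with $y<z\le 1-x$. This is at least $|\phi(r)|M(y,1-x)/n$ by the definition of the supremum.
\end{itemize}

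\emph{Class B} ($uw$ blue, $vw$ red): the symmetric computation uses $z\le 1-y$ and gives at least $|\phi(r)|M(x,1-y)/n$.

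\emph{Class C} (both blue): $uvw$ is quasi-positive. I claim each of the two coefficients with which $uw$ and $vw$ send their weight to $r$ is at least $f(x)/(xn)$. Given this, triangle-coveredness ($\phi(uw)+\phi(vw)\ge|\phi(r)|$) yields a contribution of at least $|\phi(r)|f(x)/(xn)$. To check the claim, each coefficient is either $(1-\lambda_b)/d_\ast\ge 1/(2d_\ast)$ with $d_\ast\in\{d_u,d_v\}$, which is at least $f(x)/(xn)$ since $f\le1/2$ and $d_\ast\le d_u$. Otherwise it is $\lambda_b/d_\ast$ with $d_\ast$ the larger endpoint degree. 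For $b=uw$ this gives $\lambda_b\in\{1/2,f(x)\}$; for $b=vw$ it gives $\lambda_b\in\{1/2,f(y)\}$, and $f(y)\ge f(x)$ by monotonicity with $d_v\le d_u$.

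For the counting, let $A,B,C$ be the class sizes. Since $\hat\delta(G)\ge(1-d)n$:
\begin{itemize}
\item $A\ge d_u-dn$, because at most $dn$ of the $d_u$ red $W$-neighbours of $u$ are non-adjacent to $v$, and none of them has $vw$ red;
\item similarly $B\ge d_v-dn$;
\item $A+B+C\ge(1-2d)n$.
\end{itemize}
Then $\Delta^f_\phi(r)/|\phi(r)|\ge \frac1n[A\,M(y,1-x)+B\,M(x,1-y)+C f(x)/x]$. Substituting $C\ge(1-2d)n-A-B$ rewrites this as $A$ and $B$ times the coefficients $M(\cdot)-f(x)/x$, plus $(1-2d)f(x)/x$.

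The main obstacle is justifying that we may replace $A/n$ and $B/n$ by their lower bounds $(x-d)^+$ and $(y-d)^+$. This needs both coefficients to be nonnegative. I will show this from $M(a,b)\ge \frac1a-\frac12\cdot\frac1a=\frac1{2a}\ge\frac1{2x}\ge f(x)/x$ for $a\in\{x,y\}$, using $f\le 1/2$ and $a\le x$. If $C$ happens to exceed its lower bound this only helps, since $f(x)/x\ge0$. Care is also needed with the supremum-over-empty-set convention, and with ensuring that the $\lambda_b$ case split in Definition~\ref{def:NonUniDischarging} matches the inequality $d_v\ge d_w$ or $d_v\le d_w$ used in each class.
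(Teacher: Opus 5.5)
Your proposal is correct and follows essentially the same route as the paper. The paper uses the same three-way split of the common neighbours $w$ (its $W^-_u$, $W^-_v$, $W^+$) and the same per-triangle bounds $M(y,1-x)/n$, $M(x,1-y)/n$ and $f(x)/d_u$, where the quasi-negative bounds rely on $d_u+d_w\le n$ or $d_v+d_w\le n$ from the absence of negative triangles. It also finishes with the same counting step, justified by the nonnegativity of the coefficients via $M(a,b)\ge 1/(2a)\ge f(x)/x$.
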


\begin{proof}
For each edge $e \in E(G)$, colour $e$ blue if $\phi(e)\geq 0$, and red if $\phi(e)< 0$. Let 
    \[ R = \{e\in E(G)\colon \phi(e)<0\},\text{ and }\quad B = \{e\in E(G)\colon \phi(e)\geq 0\}.\]

We use the $f$-proportional discharging of~\cref{tab:NonUniDischarging} to transfer weights between edges. Fix a red edge $r=uv\in R$, with $u\in U$ and $v\in V$, and recall that $\Delta^f_{\phi}(r)$ denotes the sum of weights received by $r$ during the discharging, that is
\[\Delta^f_{\phi}(r)=\sum_{b\in B}{\rm dis}^f_{\phi}(b,r).\]

Let $x:=\frac{d_u}{n}$ and $y:=\frac{d_v}{n}$. Observe that $d_u,d_v\geq 1$ (because $u,v$ are at least adjacent to the red edge $uv$), and that $d_u+d_v\leq n$ because $G$ contains no negative triangle. We partition the common neighbours of $u,v$ into $3$ sets of vertices, depending on the red neighbourhood of $u$ and $v$.

\begin{minipage}{\linewidth}
  \centering
  \begin{minipage}{0.40\linewidth}
    \begin{align*}
        W^-_u&:=\big\{w \in W\colon uvw\in\mc{T}(G),~uw\in R,~vw\in B\big\},\\
        W^-_v&:=\big\{w \in W\colon uvw\in\mc{T}(G),~uw\in B,~vw\in R\big\},\\
        W^+&:=\big\{w \in W\colon uvw\in\mc{T}(G),~uw\in B,~vw\in B\big\}.
    \end{align*}
  \end{minipage}
  \hfill
  \begin{minipage}{0.5\linewidth}
        \begin{figure}[H]
            \centering
            \RedEdges{2}
            \caption{Common neighbours of $u,v$ in $G$.}
            \label{fig:RedNeighbors}
        \end{figure}
  \end{minipage}
\end{minipage}\bigskip

For every vertex $w\in W^+$, that is, every $w \in W$ such that $T=uvw$ is a quasi-positive triangle in $G$, let $z:=d_w/n$. We have
\[\frac{{\rm dis}^f_{\phi}(vw,uv)}{\phi(vw)}\cdot d_v=
\begin{cases}
    f(y),&\text{ if }y>z,\\
    1/2\geq f(y),&\text{ if }y=z,\\
    1-f(z)\geq 1/2\geq f(y),&\text{ if }y<z,
\end{cases}
\]
hence
\[
{\rm dis}^f_{\phi}(vw,uv)\geq\frac{f(y)}{d_v}\cdot \phi(vw).\]
Similarly,
\[{\rm dis}^f_{\phi}(uw,uv)\geq\frac{f(x)}{d_u}\cdot \phi(uw).\]
Recall that $\phi$ is triangle covered, therefore we have $\phi(vw)+\phi(uw)\geq |\phi(r)|$. As $d_u\geq d_v$ and $f$ is non-increasing, we obtain that, for every $w\in W^+$, the weight edge $r$ received during discharging from the triangle $uvw$ is at least
\[ \frac{f(y)}{d_v}\cdot \phi(vw) + \frac{f(x)}{d_u}\cdot \phi(uw)
\geq \frac{f(x)}{d_u}\cdot \parens*{\phi(vw)+\phi(uw)} 
\geq \frac{f(x)}{d_u}\cdot |\phi(r)|.\]

We now fix a vertex $w\in W^-_v$, that is, a $w \in W$ such that $uvw$ is a quasi-negative triangle in $G$ with $b:=uw\in B$ and $vw\in R$, and we again define $z:=d_w/n$. We have
\[
\frac{{\rm dis}^f_{\phi}(b,r)}{|\phi(r)|}
=\frac{{\rm dis}^f_{\phi}(uw,uv)}{|\phi(uv)|}
=\begin{cases}
    \dfrac{f(x)}{d_u}+\dfrac{1-f(x)}{d_w},& d_w< d_u,\\
    \dfrac{1}{d_u},&d_w=d_u\\
    \dfrac{1-f(z)}{d_u}+\dfrac{f(z)}{d_w},& d_w> d_u.
\end{cases}
=\begin{cases}
    \dfrac{f(x)}{d_u}+\dfrac{1-f(x)}{d_w},& d_w\leq d_u,\\~\\
    \dfrac{1-f(z)}{d_u}+\dfrac{f(z)}{d_w},& d_w> d_u.
\end{cases}\]
When $d_w \leq d_u$, this is minimised when $d_w=d_u$, with value $1/d_u$. When $d_w>d_u$, this value is $\frac{1}{d_u}-f(z)\cdot\parens*{\frac{1}{d_u}-\frac{1}{d_w}}$.
As $\phi$ is triangle-covered, we know that $G$ contains no negative triangles (i.e. fully red triangles), hence $d_v+d_w\leq n$. It follows that
\[\frac{{\rm dis}^f_{\phi}(b,r)}{|\phi(r)|} \geq \frac{1}{n}\cdot\parens*{
\frac{1}{x}-\sup_{x<z\leq 1-y}\brackets*{f(z)\cdot\parens*{\frac{1}{x}-\frac{1}{z}}
}}=M(x,1-y)\cdot\frac1n.\]
Similarly,
\[
\frac{{\rm dis}^f_{\phi}(vw,uv)}{|\phi(uv)|} \geq M(y, 1-x)\cdot\frac1n
\]
for all $w \in W_u^-$. We obtain that the edge $r$ receives, from the discharging process, a total weight of $\Delta^f_{\phi}(r)$ with
\begin{align*}
\frac{\Delta^f_{\phi}(r)}{|\phi(r)|}\geq 
    M(y,1-x)\cdot\frac{|W^-_u|}{n}
    +M(x,1-y)\cdot\frac{|W^-_v|}{n}
    +\frac{f(x)}{x}\cdot \frac{|W^+|}{n}.
\end{align*}
As $\hat{\delta}(G)\geq (1-d)n$, we have
\((1-2d)n\leq |W^+|+|W_u^-|+|W_v^-|,\)
and therefore
\[\frac{\Delta^f_{\phi}(r)}{|\phi(r)|}\geq 
    \brackets*{M(y,1-x)-\frac{f(x)}{x}}\cdot \frac{|W^-_u|}{n}
    +\brackets*{M(x,1-y)-\frac{f(x)}{x}}\cdot \frac{|W^-_v|}{n}
    +\frac{f(x)}{x}\cdot (1-2d).\]
Observe that  $M(a,b)\geq \frac{1}{2a}$ for all $a, b\in(0,1)$, hence
\[M(y,1-x)\geq\frac{1}{2y}\geq \frac{1}{2x}\geq \frac{f(x)}{x},\quad\text{ and}\qquad
M(x,1-y)\geq\frac{1}{2x}\geq \frac{f(x)}{x},\]
and as $G$ contains no negative triangle, and using the fact that $\phi$ is balanced, we have
\[|W^-_u|\geq (d_u-dn)^+=(x-d)^+n,\text{ and}\qquad |W^-_v|\geq (d_v-dn)^+=(y-d)^+n.\]
Combining these inequalities, we obtain
\begin{align*}
\frac{\Delta^f_{\phi}(r)}{|\phi(r)|}\geq 
    &\brackets*{M(y,1-x)-\frac{f(x)}{x}}\cdot (x-d)^+\\
    +&\brackets*{M(x,1-y)-\frac{f(x)}{x}}\cdot (y-d)^+\\
    +&\frac{f(x)}{x}\cdot (1-2d),
\end{align*}
as desired.
\end{proof}

It remains to prove that, for well-chosen $d$ and $f$, the lower bound obtained in \cref{prop:dischargingvalue} is always at least one, for all possible values of $x,y$.

\begin{prop}\label{prop:optim}
    There exist a non-increasing function $f:[0,1]\to[0,\frac12]$ and $\eta>0$ such that the following holds for every $0\leq d\leq 0.231+\eta$. Let
    \begin{align*}
        g_d:(0,1)^2&\to \mathbb{R}\\
        (x,y)&\mapsto     \brackets*{M(y,1-x)-\frac{f(x)}{x}}\cdot (x-d)^+
    +\brackets*{M(x,1-y)-\frac{f(x)}{x}}\cdot (y-d)^+
    +\frac{f(x)}{x}\cdot (1-2d),
    \end{align*}
    where 
    \begin{align*}
        M:(0,1)^2&\to\mathbb{R}\\
        (a,b) &\mapsto  \frac{1}{a}-\sup_{a<z\leq b}\brackets*{f(z)\cdot\parens*{\frac{1}{a}-\frac{1}{z}}}.
    \end{align*}
    Then $g_d(x,y)\geq 1$ for all $x\geq y \in (0,1)$ with $0<x+y\leq 1$.
\end{prop}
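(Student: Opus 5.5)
The first reduction is to a single value of $d$. I would show that $g_d(x,y)$ is non-increasing in $d$ for every fixed $(x,y)$. Indeed, $\partial g_d/\partial d$ is a sum of three kinds of terms, each of which is non-positive:
\begin{itemize}
\item $-\brackets*{M(y,1-x)-\frac{f(x)}{x}}$, present when $x>d$;
\item $-\brackets*{M(x,1-y)-\frac{f(x)}{x}}$, present when $y>d$;
\item $-\frac{2f(x)}{x}$, always present.
\end{itemize}
The first two are non-positive because $M(a,b)\ge \frac1{2a}$ and $f\le\frac12$, exactly as observed in the proof of \cref{prop:dischargingvalue}. So it suffices to prove $g_{d_0}(x,y)\ge 1$ for a single $d_0$ slightly larger than $0.231$, say $d_0=0.2311$, and then take $\eta=d_0-0.231$. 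Working with this strict margin also avoids boundary and strictness issues.

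Next I would fix a concrete $f$. Natural choices are piecewise linear or step functions with finitely many breakpoints, taking the value $\frac12$ on an initial interval and decreasing afterwards. Two features make a step-like $f$ convenient. First, $M(a,b)$ becomes explicit: on each piece, $f(z)(\frac1a-\frac1z)$ is monotone or has an elementary maximiser, so the supremum is a maximum over breakpoints and endpoints. Second, $M(a,b)$ is non-increasing in $b$, so the constraint $x+y\le1$ is only used through $M(y,1-x)$ and $M(x,1-y)$. With this $f$, I would split the domain $\{y\le x,\ x+y\le1\}$ into three regimes according to the position of $d_0$.
\begin{enumerate}
\item \emph{$x\le d_0$.} Here $g=\frac{f(x)(1-2d_0)}{x}$, so the requirement is $f(x)\ge \frac{x}{1-2d_0}$ on $(0,d_0]$. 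Since $\frac{d_0}{1-2d_0}\approx0.43<\frac12$, this holds if $f\equiv\frac12$ on $[0,d_0]$.
\item \emph{$y\le d_0<x$.} The requirement becomes a one-parameter-family inequality in which $y$ enters only through $M(y,1-x)$. Since this coefficient multiplies $(x-d_0)>0$, I would minimise it over $y\le d_0$ explicitly, reducing to a one-variable inequality in $x$.
\item \emph{$d_0<y\le x<1-d_0$.} This is a genuinely two-variable region, and it is also compact.
\end{enumerate}

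In each regime I would then certify the resulting inequalities with a margin. The one-dimensional cases I would handle analytically where possible, and otherwise by bounding derivatives. The two-dimensional region I would handle by a computer-assisted grid check combined with Lipschitz or monotonicity bounds: for example, $M(y,1-x)$ is monotone in each argument, and $f(x)/x$ is decreasing. These bounds control $g$ between grid points, in the spirit of the Python/Mathematica listings the paper is set up for.

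The main obstacle I expect is choosing $f$ so that all three regimes hold simultaneously at $d_0\approx0.231$. Increasing $f$ helps the $\frac{f(x)}{x}(1-2d)$ term but hurts $M$, through the supremum term $f(z)(\frac1a-\frac1z)$. I would locate the binding configurations numerically, most likely near $x\approx y$ and near $x+y=1$, by optimising the parameters of a step/linear $f$. Only then would I write the rigorous verification around those tight spots.
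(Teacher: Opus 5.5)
Your scaffolding is sound and follows the paper's proof. The monotonicity of $g_d$ in $d$ (using $M(a,b)\ge\frac1{2a}$, $y\le x$ and $f\le\frac12$) is the paper's observation, which it applies to its lower bound $B_d$. Your three regimes are its Cases~1--3. Obtaining $\eta$ from a single verification at some $d_0>0.231$ is, if anything, cleaner than the paper's compactness remark.

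The gap is that the proposal stops where the proof begins. The proposition is a pure existence statement, and its whole content is an explicit witness $f$ plus a certified inequality. You never commit to an $f$ and check nothing beyond Regime~1. ``Optimise the parameters, then verify'' is a plan, not an argument.

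The paper takes $f=\frac12$ on $[0,\frac25]$, $f(t)=\frac{7-10t}{6}$ on $[\frac25,\frac7{10}]$, and $f=0$ afterwards. It also uses a simplification absent from your plan: it bounds $M(a,b)\ge m(a):=\frac1a-\sup_{a<z<1}f(z)(\frac1a-\frac1z)$, which removes the coupling through $b=1-x$ or $b=1-y$. This has three consequences:
\begin{itemize}
\item $m$ has a closed form with three branches and is decreasing.
\item Regime~2 becomes immediate via $m(y)\ge m(d)$.
\item Regime~3 becomes $(x-d)m(y)+(y-d)m(x)+\frac{f(x)}{x}(1-x-y)$, which is convex in $y$, so the minimum reduces to boundary and stationary points evaluated numerically (smallest value about $1.0004$).
\end{itemize}

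This last step is not a formality, because the margin is tiny. For the paper's $f$, the minimum at $d=0.231$ is about $1.00047$. It is attained at the interior point $(x,y)\approx(0.4466,0.3437)$, not near $x\approx y$ or $x+y=1$ where you expected the binding configurations. At that point, the maximiser $z=\sqrt{7a/10}$ of $f(z)(\frac1a-\frac1z)$ lies in $(a,b]$ for both $(a,b)=(y,1-x)$ and $(a,b)=(x,1-y)$. Hence $g_d=B_d$ exactly there, and $\partial g_d/\partial d=-(m(x)+m(y))\approx-4.74$. Consequently $g_{0.2311}\approx 0.999997<1$ at that point. So your illustrative $d_0=0.2311$ already fails for the paper's $f$, whose admissible $\eta$ is below $10^{-4}$.

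To complete your route you must do two things:
\begin{itemize}
\item Actually exhibit an $f$ (the paper's, or a better optimised one) together with a specific $d_0$ inside its true margin.
\item Certify the two-variable regime to an accuracy far finer than $10^{-4}$ near the minimiser.
\end{itemize}
With first-order corner or Lipschitz bounds this forces very fine local grids. That is why the paper first extracts the closed form for $m$ and convexity in $y$ before resorting to numerics.
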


The proof of \cref{prop:optim} consists of straightforward but somewhat lengthy numerical analysis and thus has been deferred to \cref{sec:AppendixOptim}. Here we just record that the function playing the role of $f$ in our proof is the piecewise-linear function defined by
\begin{equation}\label{e:f}
f(t):=
\begin{cases}
\frac12&\text{ if } t\leq \frac25,\\
0&\text{ if } t>\frac{7}{10},\\
\frac{7-10t}{6}&\text{ otherwise. }
\end{cases}
\end{equation}

\subsection{Main proof}

We are finally ready to prove our main theorem.

\begin{proof}[Proof of~\cref{thm:main}.]
Let $f:[0,1]\to[0,1/2]$ and $\eta>0$ be given by~\cref{prop:optim}. Let $G$ be a $K_3$-divisible tripartite graph with $\hat\delta(G)\geq(1-d)n$ for $d=0.231+\eta$. Assume that $G$ contains no fractional $K_3$-decomposition. Therefore, by~\cref{lem:FarkasExtended}, there exists a triangle-covered, totally-negative, balanced and non-zero edge-weighting $\phi:E(G)\to\mathbb{R}$.

Let $\psi:E(G)\to\mathbb{R}$ be the edge-weighting obtained from $\phi$ after applying the discharging ${\rm dis}^f_{\phi}$ as defined in~\cref{def:NonUniDischarging}. By~\cref{prop:BlueStillPositive}, for every edge $b$ of $G$ with $\phi(b)\geq0$ we have
\[\psi(b) = \phi(b)-\sum_{r\in E(G)}{\rm dis}^f_{\phi}(b,r)\geq 0.\]
By~\cref{prop:dischargingvalue,prop:optim} for every edge $r\in G$ with $\phi(r)<0$, we have
\[\psi(r) = \Delta^f_{\phi}(r)+\phi(r)\geq 0.\]
It follows that 
\[\sum_{e\in E(G)}\phi(e)=\sum_{e\in E(G)}\psi(e)\geq 0,\]
contradicting the fact that $\phi$ is totally-negative. Therefore $G$ admits a fractional $K_3$-decomposition.
\end{proof}

\section{Lower bound}\label{sec:LowerBound}

Here we prove~\cref{prop:LowerBound}. 
Recall that a {\em $k$-plex} is a partial Latin square of order $n$ containing $kn$ entries such that every row, column, and symbol is used exactly $k$ times. A {\em transversal} of a Latin square is a $1$-plex. Wanless~\cite[Theorem 1]{W2002} proved that for every $n>2$ and every $0\leq k\leq n$, there exists a completable $k$-plex, and for $k>n/4$ that not all $k$-plexes are completable~\cite[Theorem 2]{W2002}. The following construction comes from ``Case 1'' in the proof of~\cite[Theorem 2]{W2002}, with a proof adapted for fractional decompositions.

\begin{proof}[Proof of \cref{prop:LowerBound}]
Let $k:=n/4\geq 2$ and $m:=n/2=2k$. Let $H_1$ be a completable $k$-plex of order $m$ (such a completable partial Latin square exists by~\cite[Theorem~1]{W2002}), interpreted as a subgraph of $K_{m, m, m}$ on vertex set $(U_1,V_1,W_1)$. Similarly, let $H_2\subseteq K_{m,m,m}$ be a $k$-plex of order $m$ on vertex set $(U_2,V_2,W_2)$, disjoint from $(U_1,V_1,W_1)$. Let $H\subseteq K_{n,n,n}$ be the $n/4$-plex defined as the disjoint union of $H_1,H_2$, on vertex set $(U_1\cup U_2,V_1\cup V_2,W_1\cup W_2)$. Let $G:=K_{n,n,n}\setminus H$. We say that an edge $xy\in E(G)$ is {\em internal} if $x,y\in V(H_i)$ for some $i\in\{1,2\}$, and {\em crossing} otherwise. 

Let $\phi:E(G)\to \mathbb{R}$ be defined by
\[\phi(e):=\begin{cases}
    +2&\text{ if $e$ is internal,}\\
    -1&\text{ if $e$ is crossing.}
\end{cases}\]

Observe first that every triangle $T$ of $G$ contains either 1 or 3 internal edges, hence that
\[\sum_{e\in T} \phi(e)\in\{ 0,+6\}\]
and $\phi$ is triangle-covered. Note that $G$ contains $6m^2=24k^2$ crossing edges, and $6m(m-k)=12k^2$ internal edges, hence
\[\sum_{e\in E(G)}\phi(e)=0.\]
Finally, let $G'=G-T$ where $T$ is any triangle of $G$ with vertex set fully in $H_1$. Obverse that such a triangle $T$ exists as $H_1$ is completable. We have  $\hat\delta(G')=\frac{3n}{4}-1$, and since $H$ is a triangle packing, $G=K_{n,n,n}\setminus H$ is $K_3$-divisible; deleting the triangle $T$ then preserves $K_3$-divisibility. Let $\phi'$ be the edge-weighting of $G'$ induced by $\phi$. Then $\phi'$ is triangle-covered and
\[\sum_{e\in E(G')}\phi'(e)=\sum_{e\in E(G)}\phi(e)-\sum_{e\in T}\phi(e)=0-6<0,\]
so $\phi'$ is totally negative. By~\cref{lem:NoFrac}, $G'$ admits no fractional $K_3$-decomposition.
\end{proof}

\section{Future work}\label{sec:FutureWork}

In this section, we discuss the potential to use different discharging rules to improve on \cref{thm:main}. We also discuss some related future research directions.

The first discharging rule that we used to attack \cref{conj:decompo} was a uniform proportional discharging rule. That is, the discharging rule from \cref{def:NonUniDischarging} with $f$ defined by $f(x)=1/2$, the first-order proportional discharging considered by Delcourt and Postle in their work on the Nash-Williams conjecture~\cite{DP26NW}. This yielded \cref{thm:main} with $d=0.215$ in place of $d=0.231$. We then switched to $f$-uniform proportional discharging and experimented with different choices for $f$. Settling on $f$ defined by \cref{e:f} yielded \cref{thm:main}. There may be some room for improving \cref{thm:main} just by changing the choice of $f$, but the following proposition tells us that no choice of $f$ could produce an improvement better than $d=4/17\approx 0.235$.

\begin{prop}\label{prop:nonUniformUB}
    For every non-increasing function $f:[0,1]\to[0,\frac12]$, the following holds for every $d> 4/17$. Let
    \begin{align*}
        g_d:(0,1)^2&\to \mathbb{R}\\
        (x,y)&\mapsto     \brackets*{M(y,1-x)-\frac{f(x)}{x}}\cdot (x-d)^+
    +\brackets*{M(x,1-y)-\frac{f(x)}{x}}\cdot (y-d)^+
    +\frac{f(x)}{x}\cdot (1-2d),
    \end{align*}
    where 
    \begin{align*}
        M:(0,1)^2&\to\mathbb{R}\\
        (a,b) &\mapsto  \frac{1}{a}-\sup_{a<z\leq b}\brackets*{f(z)\cdot\parens*{\frac{1}{a}-\frac{1}{z}}}.
    \end{align*}
    Then $g_d(x,y)< 1$ for some $x\geq y \in (0,1)$ with $0<x+y\leq 1$.
\end{prop}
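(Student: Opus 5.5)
The plan is to exhibit, for any non-increasing $f$ and any $d>4/17$, a small set of explicit test points $(x,y)$ at which $g_d(x,y)\ge 1$ would impose linear constraints on a few values of $f$, and then show these constraints are jointly infeasible. The starting observation is that $M$ can only be bounded \emph{above} by plugging a single admissible $z$ into the supremum:
\[
M(a,b)\le \frac1a-f(z)\Bigl(\frac1a-\frac1z\Bigr)\qquad\text{for any } a<z\le b.
\]
Combined with this, $g_d$ becomes an explicit affine function of finitely many values of $f$. A quick sanity check, which I expect to be the degenerate endpoint of the construction, is $x=y=\tfrac12$. There $M(\tfrac12,\tfrac12)=2$ because the supremum is over an empty set, and a direct computation gives $g_d(\tfrac12,\tfrac12)=2(1-2d)$. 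So this point alone only rules out $d>1/4$, and a one-parameter family of points is needed to reach $4/17$.

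The family I would use is indexed by $a\in(d,\tfrac12)$, with the unknowns $\alpha:=f(a)$ and $\beta:=f(1-a)$. Monotonicity and the range of $f$ give $\tfrac12\ge\alpha\ge\beta\ge0$.

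\textbf{Test point $(a,a)$.} Taking $z=1-a$ in the supremum gives
\[
g_d(a,a)\le \frac{2(a-d)}{a}\Bigl[1-\alpha-\beta\,\frac{1-2a}{1-a}\Bigr]+\frac{\alpha(1-2d)}{a}.
\]
This is increasing in $\alpha$ and decreasing in $\beta$.

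\textbf{Test point $(1-a,d)$.} Here $y=d$, so the second term vanishes. Taking $z=a$, which is admissible since $d<a\le 1-x=a$, gives
\[
g_d(1-a,d)\le\Bigl[\frac1d-\alpha\Bigl(\frac1d-\frac1a\Bigr)-\frac{\beta}{1-a}\Bigr](1-a-d)+\frac{\beta(1-2d)}{1-a}.
\]
This is decreasing in $\alpha$ and increasing in $\beta$, since the net coefficient of $\beta$ is $(a-d)/(1-a)>0$.

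The two points therefore pull in opposite directions on both $\alpha$ and $\beta$. Requiring both bounds to be at least $1$ gives a two-variable linear program in $(\alpha,\beta)$ on the region $\tfrac12\ge\alpha\ge\beta\ge0$. I would then solve it at its vertices, namely $\alpha=\beta$, $\alpha=\tfrac12$, and $\beta=0$. If necessary I would add a third test point, for instance $(1-a',d)$ or $(a',a')$ for a second parameter $a'$, tied to an intermediate value of $f$.

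The main obstacle is choosing the parameters, $a$ and possibly a second point, so that infeasibility holds for every $d>4/17$ rather than only for some larger $d$. The value $4/17$ is presumably where the optimal $a$ makes the feasible region collapse to a single point. I would first treat $a$ as free, eliminate $\alpha$ and $\beta$ by Fourier--Motzkin on the inequalities above, and solve for the threshold $d^*(a)$. I would then minimise $d^*(a)$ over $a$ and check that the minimum equals $4/17$, attained at a rational $a$, with $d^*$ possibly attained in the limit $a\to$ boundary. If two points do not suffice, the natural fix is to add the points $(x,d)$ with $x$ near $1-d$, which constrain $f$ near $1-d$. The final write-up is then: fix $d>4/17$, choose the optimal $a$ explicitly, and show that no $\alpha,\beta$ in the allowed region satisfy both inequalities. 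This gives a point where $g_d<1$.
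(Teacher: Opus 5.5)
\textbf{There is a genuine gap: your two-point scheme does not reach $4/17$.} Your two upper bounds are computed correctly, but the central step fails for $d$ slightly above $4/17$: no choice of $a$ makes the linear program in $(\alpha,\beta)$ infeasible. Take $d=0.24>4/17\approx 0.2353$ and any $a\in(d,\tfrac12)$, and set $\beta=0$, $\alpha=\max\{0,\frac{2d-a}{1-2a}\}$.
\begin{itemize}
\item Since $d\le\tfrac14$, we have $0\le\alpha\le\tfrac12$.
\item By the choice of $\alpha$, your $(a,a)$-bound is at least $1$.
\item With $\beta=0$, your $(1-a,d)$-bound is at least $1$ if and only if $\alpha\le\frac{a(1-a-2d)}{(a-d)(1-a-d)}$. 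For $a\ge 2d$ this is immediate, because then $\alpha=0$ and $a<1-2d$. For $d<a<2d$ it is equivalent to $a^3-0.56a^2-0.1424a+0.087552\ge 0$, which holds on $(0.24,0.48)$; the minimum there is about $7\cdot10^{-4}$.
\end{itemize}
So at this $d$ the program is feasible for \emph{every} $a$, and the same holds in the limits $a\to d$ and $a\to\tfrac12$. Hence $4/17$ is not where your feasible region collapses; your pair only works for some $d$ above $0.24$. Your separate observation $g_d(\tfrac12,\tfrac12)=2(1-2d)<1$ for $d>\tfrac14$ is fine. The suggested repairs give no argument that they close the gap: these are more points $(a',a')$, $(1-a',d)$, or $(x,d)$ with $x$ near $1-d$. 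They also all avoid the region where the decisive point lies.

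\textbf{The missing idea is a test point $(x,d)$ with $d<x\le\tfrac12$,} on the other side of $\tfrac12$ from your $(1-a,d)$. Bound the supremum in $M(d,1-x)$ by the single choice $z=x$, which is admissible because $x\le 1-x$. Then only one value of $f$ appears:
\[
g_d(x,d)\le \frac{x-d}{d}+f(x)\Bigl(\frac{1-2d}{x}-\frac{x-d}{d}\Bigr).
\]
Choose $x=\sqrt{d(1-2d)}$, which lies in $(d,\tfrac12)$ whenever $d<\tfrac13$. This makes the coefficient of $f(x)$ exactly $1$, so $f\le\tfrac12$ gives $g_d(x,d)\le\sqrt{(1-2d)/d}-\tfrac12$. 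This is $<1$ precisely when $d>4/17$; at $d=4/17$ the point is $(6/17,4/17)$. This is the paper's argument, and no linear program is needed. Combined with your $(\tfrac12,\tfrac12)$ point for $d>\tfrac14$ (the paper instead uses $(d,d)$ for $d\ge\tfrac13$), it proves the statement.
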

\begin{proof}
    Fix a non-increasing function $f:(0,1)\to[0,1/2]$.
    If $d\geq \frac13$, let $x^*=y^*=d\geq\frac13$ and observe that
    \[g_d(x^*,y^*)\leq 3\cdot f\parens*{\frac13}\cdot(1-2d)\leq f\parens*{\frac13}< 1.\]
    Assume now that $d< \frac13$. Let $y=d<x<1/2$. We have 
    \[
    g_d(x,d):=\brackets*{M(d,1-x)-\frac{f(x)}{x}}\cdot (x-d) +\frac{f(x)}{x}\cdot (1-2d).
    \]
    Because $d<x<1-x$ we have
    \[M(d,1-x) \leq  \frac{1}{d}-\brackets*{f(x)\cdot\parens*{\frac{1}{d}-\frac{1}{x}}
    }=\frac{1-f(x)}{d}+\frac{f(x)}{x}
    ,\]
    and
    \[g_d(x,d)\leq (1-f(x))\frac{x-d}{d}+f(x)\frac{1-2d}{x}=\frac{x-d}{d}+f(x)\parens*{\frac{1-2d}{x}-\frac{x-d}{d}}.\]
    Let $x^*:=\sqrt{d(1-2d)}\leq 1-d$, hence such that 
    \[\frac{1-2d}{x^*}-\frac{x^*-d}{d} = 1.\]
    Observe that $d< \frac13$ ensures that $d< x^*$, hence $(x^*,d)$ is an admissible input to $g_d$. We then have
    \[g_d(x^*,d)\leq \frac{x^*-d}{d}+f(x^*).\]
    Using $f(x)\leq1/2$ we obtain,
    \[g_d(x^*,d)\leq \sqrt{\frac{1-2d}{d}}-\frac12,\]
    and for $d>4/17$ we have
    \[g_d(x^*,d)\leq \sqrt{\frac{1-2d}{d}}-\frac12< 1,\]
    as required.
\end{proof}

In light of \cref{prop:nonUniformUB}, to use our framework to improve \cref{thm:main}, one must either improve \cref{prop:dischargingvalue} for $f$-proportional discharging, or use entirely new discharging rules. A natural direction for improvement is to try to use the discharging rules used by Delcourt and Postle~\cite{DP26NW}, which we define now. Let $G$ be a graph and let $\phi$ be a triangle-covered, non-zero, and untied edge-weighting of $G$. Let $r=uv \in E(G)$ with $\phi(r)<0$ and let $b = vw \in E(G)$ with $\phi(b)>0$ be such that $r$, $b$ and $b' := uw$ form a triangle $T$ in $G$. Define the demand function $\dem$ by 
	\[
	\dem(b, r) = \begin{cases}
		|\phi(r)| & \text{if } T \text{ is quasi-negative},\\
		|\phi(r)|\frac{\phi(b)}{\phi(b)+\phi(b')} & \text{if } |\phi(r)| > \max\{\phi(b), \phi(b')\} \text{ and } T \text{ is quasi-positive},\\
		|\phi(r)| & \text{if } \phi(b) \geq \max\{\phi(b'), |\phi(r)|\} \text{ and } T \text{ is quasi-positive},\\
		0 & \text{if } \phi(b') \geq \max\{\phi(b), |\phi(r)|\} \text{ and } T \text{ is quasi-positive}.\\
	\end{cases}
	\]
	The discharging function $\disDP$ used by Delcourt and Postle is defined by
	\[
	\disDP(b, r) = \phi(b)\frac{\dem(b, r)}{\sum_{r' \in \mathcal{T}^-(b)} \dem(b, r')},
	\]
	where $\mathcal{T}^-(b)$ is the set of edges of negative weight that are in a triangle with $b$. So a positive edge splits its weight between all negative edges it is involved in a triangle with, where the weight such an edge gets is proportional to how much it `demands' from $b$. 
	
	The analysis of the discharging rule $\disDP$ is much more involved than the analysis of the $f$-proportional discharging defined in \cref{def:NonUniDischarging}. When we trialled this approach in the partite setting, we encountered some hypothetical configurations that we could not rule out where some negative edges remained negative after applying the discharging rules. For example, suppose that $G$ is a $K_3$-divisible tripartite graph with tripartition $(U, V, W)$ where $|U|=|V|=|W|=n \equiv 0 \bmod 4$ and $\hat\delta(G) \geq 3n/4$. Suppose that $\phi$ is a non-zero, triangle-covered edge-weighting of $G$ and colour edges blue or red depending on whether they have positive or negative weight, respectively. Let $r = uv \in E(G)$ be a red edge with weight $-a$ where $u \in U$ and $v \in V$. Suppose that we can partition $W = W_1 \cup W_2 \cup W_3 \cup W_4$ into four equal parts so that:
	\begin{itemize}
		\item The neighbourhood of $u$ is $W_1 \cup W_3 \cup W_4$,
		\item Each edge $uw$ with $w \in W_3$ is blue and each edge $uw$ with $w \in W_4$ is red,
		\item The neighbourhood of $v$ is $W_2 \cup W_3 \cup W_4$,
		\item Each edge $vw$ with $w \in W_3$ is red and each edge $vw$ with $w \in W_4$ is blue.
	\end{itemize} 
	Then every triangle containing $r$ is quasi-negative. Suppose further that every red edge incident to $r$ has weight $-a$ and every blue edge has weight $2a$. Further suppose that each blue edge incident to $r$ is in exactly $3n/4-1$ triangles other than the one containing $r$, each of which has two edges of weight $2w$ and one of weight $-4w$. Then for each blue edge $b$ incident to $r$, $\dem(b, r) = w$ and $\dem(b, r')=2w$ for all $r' \in \mathcal{T}^-(b) \setminus \{r\}$ and so
	\[
	\disDP(b, r) = 2w\frac{w}{w+(3n/4-1)2w} = \frac{4w}{3n-2}.
	\]
	Since there are $n/2$ choices for $b$, it follows that the total weight that $r$ gains from the discharging process is $2w/(3-2/n)<w$, meaning that $r$ remains red after discharging. Note that technically $\phi$ is not untied, however it is a simple matter to slightly perturb the weights of edges so that $\phi$ is untied and the conclusion remains the same. So in order to use the discharging rule of Delcourt and Postle to resolve the fractional version of \cref{conj:decompo}, configurations such as those described above must be shown to be impossible. However, a straightforward computation shows that using $f$-proportional discharging with, say, $f$ defined by $f(x)=1/2$, the final weight of $r$ in the above configuration is non-negative, meaning that this configuration is not problematic for our discharging rule.\bigskip
	
	We note that if one shows that $\hat\delta_f^*=3/4$, this would still only resolve the conjecture asymptotically, in the sense that it would `only' prove that for any $\eps>0$ and any sufficiently large $n$, if $G$ is a $K_3$-divisible spanning subgraph of $K_{n, n, n}$ with $\hat\delta(G)\geq(3/4+\epsilon)n$, then $G$ admits a $K_3$-decomposition. To remove the $\epsilon$ in the above statement would likely be an enormous effort, similarly to the analogous problem for the Nash-Williams conjecture as achieved by Delcourt and Postle.\bigskip

    It is natural to ask about the thresholds for larger cliques. For an $r$-partite graph $G$ with vertex classes $V_1,\ldots,V_r$, we define its partite minimum degree similarly to the tripartite case presented here, as $\hat\delta(G) := \min\set*{d(v, V_j ) \colon j \in [r], v \in V (G)\setminus V_j}$. We further say that such an $r$-partite graph is $K_r$-divisible if for every $i,j\in [r]$ and $v\in V(G)\setminus(V_i\cup V_j)$, we have $d(v,V_i)=d(v,V_j)$. Let $\hat{\delta}^*_{f,r}$ denote the corresponding threshold for fractional $K_r$-decompositions of sufficiently large balanced $r$-partite graphs, that is, the infimum over all $c$ such that every $K_r$-divisible $r$-partite graph on vertex set $(V_1,\ldots,V_r)$ with $|V_1|=\ldots=|V_r|=n$ for $n$ sufficiently large and $\hat{\delta}(G) \geq cn$ contains a fractional $K_r$-decomposition. Barber, Kühn, Lo, Osthus and Taylor \cite{BKLOT2017} conjectured that $\hat{\delta}^*_{f,r}=1-1/(r+1)$, in analogy with the corresponding conjecture for the non-partite fractional $K_r$-decomposition threshold. Montgomery~\cite{M17Partite} proved the upper bound $\hat{\delta}^*_{f,r} \leq 1-1/(10^6r^3)$, while his best bound in the non-partite setting is $1-1/(100r)$~\cite{M2019}. The partite and non-partite conjectures were subsequently disproved by Delcourt, Henderson, Lesgourgues and Postle~\cite{DHLP25Counter} as they proved that these thresholds are at least $1-1/(c\cdot(r+1))$ for some constant $c>1$. For the non-partite problem, the threshold is known to be of the form $1-\Theta(1/r)$. It is then natural to expect the same order in the partite setting, although no precise conjecture has been formulated. Longbrake, Simkin, Yepremyan and Zheng~\cite{ZhengTalk} recently announced an improvement on Mongomery's upper bound in the partite setting, proving that $\hat{\delta}^*_{f,r} \leq 1-1/(30r^2)$. These questions are closely related to the completion of mutually orthogonal partial Latin squares. 

\paragraph{\textbf{Declaration of AI use:}} ChatGPT 5.6 has been used solely to improve the numerical verification coding of~\cref{app:CodePython,app:CodeMathematica} in the Appendix. There has been no use of any generative AI tools for research or preparation of this article.

\paragraph{\textbf{Acknowledgements:}} This project was started during the First Bulgarian Workshop in Probabilistic and Extremal Combinatorics at the Minu Balkanski Foundation,  funded by FWF 10.55776/ESP624 and FWF 10.55776/ESP3863424. TL thanks Michelle Delcourt and Luke Postle for several helpful discussions about their work on the Nash-Williams conjecture, which provided useful perspective during the development of this project, and for comments on a first draft of this manuscript. KP was supported by ERC Starting Grant ``RANDSTRUCT'' No.~101076777. JA was supported by the Deutsche Forschungsgemeinschaft (DFG, German Research Foundation) under Germany’s Excellence Strategy – The Berlin Mathematics Research Center MATH+ (EXC-2046/2, project ID: 390685689).

\bibliographystyle{plain}
\bibliography{bibliography}

\begin{thebibliography}{10}

\bibitem{AH1983}
Lars~D{\o}vling Andersen and Anthony~JW Hilton.
\newblock Thank {E}vans!
\newblock {\em Proceedings of the London Mathematical Society}, 3(3):507--522, 1983.

\bibitem{BKLOT2017}
Ben Barber, Daniela K{\"u}hn, Allan Lo, Deryk Osthus, and Amelia Taylor.
\newblock Clique decompositions of multipartite graphs and completion of {L}atin squares.
\newblock {\em Journal of Combinatorial Theory, Series A}, 151:146--201, 2017.

\bibitem{B2013}
Padraic Bartlett.
\newblock Completions of $\varepsilon$-dense partial {L}atin squares.
\newblock {\em Journal of Combinatorial Designs}, 21(10):447--463, 2013.

\bibitem{BD2019}
Flora~C Bowditch and Peter~J Dukes.
\newblock Fractional triangle decompositions of dense $3$-partite graphs.
\newblock {\em Journal of Combinatorics}, 10(2):255--282, 2019.

\bibitem{CH1985}
Amanda Chetwynd and Roland H{\"a}ggkvist.
\newblock {\em Completing Partial $n\times n$ Latin Squares where Each Row, Column and Symbol is Used at Most $cn$ Times}.
\newblock University, Dpto. of Mathematics, 1985.

\bibitem{DH1983}
David~E Daykin and Roland H{\"a}ggkvist.
\newblock Completion of sparse partial {L}atin squares.
\newblock {\em Graph theory and combinatorics}, pages 127--132, 1983.

\bibitem{DHLP25Counter}
Michelle Delcourt, Cicely Henderson, Thomas Lesgourgues, and Luke Postle.
\newblock Beyond {N}ash-{W}illiams: Counterexamples to clique decomposition thresholds for all cliques larger than triangles.
\newblock {\em Proceedings of the American Mathematical Society}, 154(12):4107--4125, 2026.

\bibitem{DP2021progress}
Michelle Delcourt and Luke Postle.
\newblock Progress towards {N}ash-{W}illiams' {C}onjecture on triangle decompositions.
\newblock {\em Journal of Combinatorial Theory, Series B}, 146:382--416, 2021.

\bibitem{DP26NW}
Michelle Delcourt and Luke Postle.
\newblock A proof of {N}ash-{W}illiams' conjecture.
\newblock {\em arXiv preprint arXiv:2606.11178}, 2026.

\bibitem{euler}
Leonhard Euler.
\newblock Recherches sur un nouvelle esp\'{e}ce de quarr\'{e}s magiques.
\newblock {\em Verhandelingen uitgegeven door het zeeuwsch Genootschap der Wetenschappen te Vlissingen}, pages 85--239, 1782.

\bibitem{MR1580578}
Julius Farkas.
\newblock Theorie der einfachen {U}ngleichungen.
\newblock {\em J. Reine Angew. Math.}, 124:1--27, 1902.

\bibitem{G1991}
Torbj{\"o}rn Gustavsson.
\newblock {\em Decompositions of large graphs and digraphs with high minimum degree}.
\newblock PhD thesis, Univ., 1991.

\bibitem{HR01}
Penny~E. Haxell and Vojtech R{\"o}dl.
\newblock Integer and fractional packings in dense graphs.
\newblock {\em Combinatorica}, 21(1):13--38, 2001.

\bibitem{HeCheng}
Hengzhi He and Guang Cheng.
\newblock Fractional triangle decompositions at partite minimum degree $4n/5$.
\newblock {\em arXiv preprint arXiv:2609.31758}, 2026.

\bibitem{M17Partite}
Richard Montgomery.
\newblock Fractional clique decompositions of dense partite graphs.
\newblock {\em Combinatorics, Probability and Computing}, 26(6):911--943, 2017.

\bibitem{M2019}
Richard Montgomery.
\newblock Fractional clique decompositions of dense graphs.
\newblock {\em Random Structures \& Algorithms}, 54(4):779--796, 2019.

\bibitem{Nash1970}
C~St~JA Nash-Williams.
\newblock An unsolved problem concerning decomposition of graphs into triangles.
\newblock {\em Combinatorial Theory and its Applications}, 3(1070):1179--1183, 1970.

\bibitem{S1981}
Bohdan Smetaniuk.
\newblock A new construction on {L}atin squares {I}. a proof of the {E}vans conjecture.
\newblock {\em Ars Combin}, 11:155--172, 1981.

\bibitem{W2002}
Ian~M Wanless.
\newblock A generalisation of transversals for {L}atin squares.
\newblock {\em The {E}lectronic {J}ournal of {C}ombinatorics}, pages R12--R12, 2002.

\bibitem{YF2026}
Shikang Yu and Tao Feng.
\newblock On the completion of $\epsilon$-dense partial {L}atin squares.
\newblock {\em arXiv preprint arXiv:2606.31143}, 2026.

\bibitem{ZhengTalk}
Xiangxiang~Michael Zheng.
\newblock The (fractional) {$K_r$}-decomposition threshold for {$r$}-partite graphs.
\newblock Talk at the SIAM Conference on Discrete Mathematics, 2026.
\newblock Joint work with Sean Longbrake, Michael Simkin, and Liana Yepremyan.

\end{thebibliography}

\appendix
\section{Optimisation}\label{sec:AppendixOptim}
\begin{proof}[Proof of~\cref{prop:optim}]
Let $f$ be defined by
\[f(t):=
\begin{cases}
\frac12&\text{ if } t\leq \frac25,\\
0&\text{ if } t>\frac{7}{10},\\
\frac{7-10t}{6}&\text{ otherwise. } \
\end{cases}\]
Write 
\(H_a(z):=f(z)\cdot\parens*{\frac{1}{a}-\frac{1}{z}}.\) 
Observe first that for all $a,b\in(0,1)$,
\begin{equation*}
M(a,b)\geq m(a):=\frac{1}{a}-\sup_{a<z<1}H_a(z).    
\end{equation*}
If $a<z\leq \frac25$, then by definition of $f$ we have
\[H_a(z):=\frac12\cdot\parens*{\frac{1}{a}-\frac{1}{z}},\]
which is increasing in $z$. If $z>\frac7{10}$, we have $H_a(z)=0$. If $z\in\brackets*{\frac25,\frac7{10}}$, we have
\[H_a(z)=\frac{(7-10z)(z-a)}{6az},\]
and on that domain,\[H'_a(z)\geq 0 \iff 7a-10z^2\geq 0.\]
It follows that the maximum of $H_a(z)$ over $(a,1)$ is attained at 
\[z_0:=
\begin{cases}
    \frac25&\text{ if }a\leq\frac{8}{35}\\
    \sqrt{\frac{7a}{10}}&\text{ if }\frac{8}{35}\leq a\leq\frac{7}{10},
\end{cases}\]
while $H_a(z)=0$ for every $z\geq a\geq\frac{7}{10}$.

We obtain that 
\[M(a,b)\geq m(a)= 
\begin{cases}
    \dfrac{1}{2a}+\dfrac54&\text{ if }0<a\leq\frac{8}{35},\\\\
    \dfrac{\sqrt{70}}{3\sqrt{a}}-\dfrac53-\dfrac{1}{6a}  &\text{ if }\frac8{35}\leq a\leq \frac7{10},\\\\
    \dfrac1a&\text{ if }\frac7{10}\leq a\leq 1,
\end{cases}
\]

and we note that $m$ is decreasing in $a$ for future use. It remains to prove that 
\[B_d(x,y):=\brackets*{m(y)-\frac{f(x)}{x}}\cdot (x-d)^+
    +\brackets*{m(x)-\frac{f(x)}{x}}\cdot (y-d)^+
    +\frac{f(x)}{x}\cdot (1-2d)\geq 1\]
for all $0<y\leq x<1$ with $x+y\leq 1$. Observe that $m(a)\geq \frac{1}{2a}\geq\frac{f(a)}{a}$ for all $a\in(0,1)$, therefore $B_d(x,y)$ is non-increasing in $d$. The calculations below yield the uniform bound $B_{0.231}(x,y)\geq 1.0004$ over the whole admissible region. Since the finitely many cases vary continuously with $d$, a standard compactness argument shows that there exists $\eta$ such that $B_d(x,y)\geq 1 $ for every $d\leq 0.231+\eta$ and every admissible $(x,y)$, as desired. We now fix $d=0.231$.\bigskip

\textbf{Case 1.} If $x\leq d$ we have,
\[B_d(x,y)=\frac{f(x)}{x}\cdot (1-2d)\geq \frac{f(d)}{d}\cdot (1-2d) = \frac{1-2d}{2d}=1.1645>1,\]
where we used $d<\frac25$, hence $f(d)=\frac12$.\bigskip

\textbf{Case 2.} If $y\leq d\leq x$ we have,
\[B_d(x,y)=\brackets*{m(y)-\frac{f(x)}{x}}\cdot (x-d)
    +\frac{f(x)}{x}\cdot (1-2d),\]
    which is decreasing in $y$, hence
    \begin{align*}
        B_d(x,y)\geq h_d(x)
        &:= \brackets*{m(d)-\frac{f(x)}{x}}\cdot (x-d)
            +\frac{f(x)}{x}\cdot (1-2d)\\
        &=m(d)\cdot(x-d)+\frac{f(x)}{x}\cdot (1-x-d),        
    \end{align*}
where $3.414<m(d)<3.415$. We then have
\[h'_d(x) = m(d)+(1-d)\cdot \frac{f'(x)x-f(x)}{x^2}-f'(x).\]
Observe that $f'(x)=0$ if $x<\frac25$ or $x>\frac{7}{10}$, and $f'(x)=\frac{-10}{6}$ if $x\in(\frac25,\frac{7}{10})$. For $x\in[d,\frac25)$,
\[h'_d(x) = m(d)-\frac{1-d}{2x^2},\]
hence $h_d$ admits a unique minimum on that interval at $x_0=\sqrt{\frac{1-d}{2m(d)}}\approx0.3356$, satisfying
\[h_d(x_0)=m(d)(x_0-d)+\frac{1-x_0-d}{2x_0}\geq 1.0028 > 1.\]
For $x\in(\frac25,\frac{7}{10})$,
\[h'_d(x) = m(d)+\frac53-\frac{7(1-d)}{6x^2},\]
with a minimum on that range attained at $x_1=\sqrt{\frac{7(1-d)}{6m(d)+10}}\approx 0.4202$ satisfying
\[h_d(x_1)=m(d)(x_1-d)+\frac{7-10x_1}{6x_1}\cdot(1-x_1-d)\geq 1.0331 > 1.\]
Finally for $x\geq 7/10$, $f(x)=0$, hence 
\[h_d(x)= m(d)(x-d)\geq m(d)\parens*{\frac{7}{10}-d}\geq 1.6>1.\]

\textbf{Case 3.} If $d\leq y\leq x$ we have,
\begin{align*}
B_d(x,y)&=\brackets*{m(y)-\frac{f(x)}{x}}\cdot (x-d)
    +\brackets*{m(x)-\frac{f(x)}{x}}\cdot (y-d)
    +\frac{f(x)}{x}\cdot (1-2d)\\
    &= (x-d)\cdot m(y)+(y-d)\cdot m(x)+\frac{f(x)}{x}(1-x-y),
\end{align*}
with $2d\leq x+y\leq 1$. As $\frac{8}{35}\leq d\leq y\leq\min\{x,1-x\}$, we have $\frac{8}{35}\leq y\leq 1/2$ and therefore, 
\[m(y)=\frac{\sqrt{70}}{3\sqrt{y}}-\frac53-\frac{1}{6y}.\]
For fixed $x$ and $y\in(\frac{8}{35},\frac12]$ we have
\[\frac{\partial B_d}{\partial y}=(x-d)m'(y)+m(x)-\frac{f(x)}{x},
\qquad
m'(y)=\frac{1-\sqrt{70y}}{6y^2}<0\]
and
\[\frac{\partial^2 B_d}{\partial y^2}=(x-d)m{''}(y),
\qquad
m''(y)=\frac{3\sqrt{70y}-4}{12y^3}>0.\]
Therefore $B_d(x,\cdot)$ is convex on $(d,\min\{x,1-x\}]$, and its minimum is attained either at a boundary ($y=d$ or $y=\min\{x,1-x\}$) or at a unique interior point $y_0=y_0(x)$ with
\[(x-d)m'(y_0)+m(x)-\frac{f(x)}{x}=0.\]
Below we distinguish four cases depending on $x$. For each case, we substitute the explicit formulas for $m$ and $f$ and determine the minimum value of $B_d(x, y)$ by checking the three possibilities above. The numerical values in the four cases below were independently verified using both Python (SymPy and NumPy) and Mathematica. The Python computation solves the relevant stationary-point equations and additionally performs a dense grid search over each admissible region, see~\cref{app:CodePython}. Independently, we used Mathematica's \texttt{NMinimize} to minimise $B_d$ directly over each of the four constrained regions, using high-precision arithmetic, see~\cref{app:CodeMathematica}. Both computations give the values reported below.
\begin{enumerate}
    \item For $x\in[d,2/5]$, we have $x\leq 1-x$, $f(x)=1/2$, and 
    \[\min_{d\leq y\leq x}B_d(x,y)=B_d(x_0,d)\geq 1.0028 >1,\quad x_0=\sqrt{\frac{1-d}{2m(d)}}.\]
    \item For $x\in[2/5,1/2]$, we have $x\leq 1-x$, $f(x)=(7-10x)/6$, and
    \[\min_{d\leq y\leq x}B_d(x,y)=B_d(x^*,y^*)\geq 1.0004>1,\quad x^*\approx 0.4466,~y^*=y_0(x^*)\approx0.3437.\]
    \item For $x\in[1/2,7/10]$, we have $1-x\leq x$, $f(x)=(7-10x)/6$, and
    \[\min_{d\leq y\leq 1-x}B_d(x,y)=B_d(1/2,y_0(1/2))\geq 1.0203 >1.\]
    \item For $x\in[7/10,1-d]$, we have $1-x\leq x$, $f(x)=0$, and
    \[\min_{d\leq y\leq 1-x}B_d(x,y)=B_d(7/10,3/10)\geq 1.4443 >1.\qedhere\]
\end{enumerate}
\end{proof}

\section{Numerical verification - Python}\label{app:CodePython}

The following Python code, using \texttt{SymPy} and \texttt{NumPy}, was used to verify the numerical calculations from~\cref{sec:AppendixOptim} in the proof of~\cref{prop:optim}.

\begin{lstlisting}[
    style=coloredpython,%python,
    caption={Numerical verification of the optimisation in~\cref{sec:AppendixOptim}.},
    label={lst:optimisation}
]
import sympy as sp
import numpy as np

# Numerical verification of Appendix A, Case 3(i)--(iv)

#We fix d
d = 0.231

# f-function from (8)
def f(x):
    x = np.asarray(x, dtype=float)
    return np.where(x <= 2/5,0.5,np.where(x <= 0.7, (7 - 10*x)/6, 0.0))

# m(a) from the appendix. # In Case 3 we always have x,y >= d > 8/35, so only the
# second and third branches are actually needed.
def m(a):
    a = np.asarray(a, dtype=float)
    return np.where(a <= 8/35,1/(2*a) + 5/4,
        np.where(a <= 0.7,np.sqrt(70)/(3*np.sqrt(a)) - 5/3 - 1/(6*a),1/a))

def B(x, y):
    """B_d(x,y), with d fixed to 0.231."""
    return ((x-d)*m(y) + (y-d)*m(x) + f(x)/x*(1-x-y))

# ------------------------------------------------------------
# Useful SymPy expressions
# ------------------------------------------------------------

X, Y = sp.symbols('X Y', positive=True)

m_mid = sp.sqrt(70)/(3*sp.sqrt(X)) - sp.Rational(5, 3) - 1/(6*X)
m_mid_y = sp.sqrt(70)/(3*sp.sqrt(Y)) - sp.Rational(5, 3) - 1/(6*Y)
mprime_y = sp.diff(m_mid_y, Y)

md = float(m(d))
x0 = np.sqrt((1-d)/(2*md))

print(f"d ={d:.3f}, m(d)={md:.4f}, x0={float(x0):.4f}")

# ==============================================================================
# (i)  d <= y <= x <= 2/5 ; We verify that min B_d(x,y) = B_d(x0,d) >= 1.0028.
# ==============================================================================

val_i = B(x0, d)

# Independent dense-grid check over the whole triangular region.
xs = np.linspace(d, 0.4, 1501)
best_i = (np.inf, None, None)

for x in xs:
    ys = np.linspace(d, x, 1001)
    vals = B(x, ys)
    j = np.argmin(vals)
    if vals[j] < best_i[0]:
        best_i = (float(vals[j]), float(x), float(ys[j]))

print()
print("--------------------- Case 3(i) --------------------")
print("Claimed boundary minimiser in appendix")
print(f"  x0={float(x0):.4f}, y=d={d:.3f}, with B(x0,d)={float(val_i):.4f}")
print("Dense-grid minimum:")
print(f"  B={float(best_i[0]):.5f} at x={float(best_i[1]):.5f} and y ={float(best_i[2]):.5f}")

# ============================================================
# (ii)  2/5 <= x <= 1/2,  d <= y <= x
# For fixed x, an interior minimiser y0(x) satisfies
#   (x-d)m'(y) + m(x) - f(x)/x = 0.
# We solve simultaneously
#   dB/dy = 0, dB/dx = 0
# in the interior, then compare with the boundary.
# ============================================================

dS = sp.Float(d, 30)

f_mid_x = (7 - 10*X)/6
m_x = sp.sqrt(70)/(3*sp.sqrt(X)) - sp.Rational(5,3) - 1/(6*X)
m_y = sp.sqrt(70)/(3*sp.sqrt(Y)) - sp.Rational(5,3) - 1/(6*Y)

B_mid = ((X-dS)*m_y + (Y-dS)*m_x + f_mid_x/X*(1-X-Y))

Bx = sp.diff(B_mid, X)
By = sp.diff(B_mid, Y)

sol = sp.nsolve(
    [Bx, By],
    [X, Y],
    [0.446, 0.344],
    tol=1e-28,
    maxsteps=100,
    prec=50
)

x_star = float(sol[0])
y_star = float(sol[1])
val_ii = float(B_mid.subs({X: sol[0], Y: sol[1]}).evalf(30))

# Dense-grid check
xs = np.linspace(0.4, 0.5, 1501)
best_ii = (np.inf, None, None)

for x in xs:
    ys = np.linspace(d, x, 1501)
    vals = B(x, ys)
    j = np.argmin(vals)
    if vals[j] < best_ii[0]:
        best_ii = (float(vals[j]), float(x), float(ys[j]))

print()
print("--------------------- Case 3(ii) --------------------")
print("Interior stationary point:")
print(f"  x*={float(x_star):.4f}, y*={float(y_star):.4f}, B(x*,y*) ={float(val_ii):.4f}")
print("  dB/dx =", float(Bx.subs({X: sol[0], Y: sol[1]})))
print("  dB/dy =", float(By.subs({X: sol[0], Y: sol[1]})))
print()
print("Dense-grid minimum:")
print(f"  B={float(best_ii[0]):.5f} at x={float(best_ii[1]):.5f} and y ={float(best_ii[2]):.5f}")

# ============================================================
# (iii)  1/2 <= x <= 7/10,  d <= y <= 1-x
# The appendix says the minimum is attained at
# x = 1/2 and y = y0(1/2), where y0 solves dB/dy = 0.
# ============================================================

x_half = 0.5

# Solve the fixed-x stationarity equation in y.
By_half = sp.N(By.subs(X, sp.Rational(1,2)), 40)
y_half = sp.nsolve(By_half, Y, 0.33, prec=50)
y_half_f = float(y_half)
val_iii = B(x_half, y_half_f)

# Dense-grid check over the whole region.
xs = np.linspace(0.5, 0.7, 1501)
best_iii = (np.inf, None, None)

for x in xs:
    ymax = 1-x
    if ymax < d:
        continue
    ys = np.linspace(d, ymax, 1501)
    vals = B(x, ys)
    j = np.argmin(vals)
    if vals[j] < best_iii[0]:
        best_iii = (float(vals[j]), float(x), float(ys[j]))

print()
print("--------------------- Case 3(iii) --------------------")
print("Claimed minimiser:")
print(f"  x=1/2, y=y0(1/2)={y_half_f:.5f}, with B(1/2,y0)={float(val_iii):.4f}.")
print()
print("Dense-grid minimum:")
print(f"  B={float(best_iii[0]):.5f} at x={float(best_iii[1]):.5f} and y ={float(best_iii[2]):.5f}")


# ============================================================
# (iv)  7/10 <= x <= 1-d,  d <= y <= 1-x
#
# Here f(x)=0 and m(x)=1/x, while y <= 0.3 < 0.7.
# The appendix says the minimum is B(0.7,0.3).
# ============================================================

val_iv = B(0.7, 0.3)

xs = np.linspace(0.7, 1-d, 1501)
best_iv = (np.inf, None, None)

for x in xs:
    ymax = 1-x
    if ymax < d:
        continue
    ys = np.linspace(d, ymax, 1501)
    vals = B(x, ys)
    j = np.argmin(vals)
    if vals[j] < best_iv[0]:
        best_iv = (float(vals[j]), float(x), float(ys[j]))

print()
print("--------------------- Case 3(iv) ---------------------")
print("Claimed minimiser:")
print(f"  x=0.7, y=0.3, with B(0.7,0.3)={float(val_iv):.4f}.")
print()
print("Dense-grid minimum:")
print(f"  B={float(best_iv[0]):.5f} at x={float(best_iv[1]):.5f} and y ={float(best_iv[2]):.5f}")


# ===================================
# Final summary
# ===================================

print("--------------------- SUMMARY ---------------------")
print("(i)   minimum claimed >= 1.0028; computed:", val_i)
print("(ii)  minimum claimed >= 1.0004; computed:", val_ii)
print("(iii) minimum claimed >= 1.0203; computed:", val_iii)
print("(iv)  minimum claimed >= 1.4443; computed:", val_iv)
\end{lstlisting}

\subsection{Output}

\begin{lstlisting}[
    style=output,
    caption={Output of the Python numerical verification.},
    label={lst:python-output}
]
d = 0.231, m(d) = 3.4144, x0 = 0.3356

----------------------------------------------------
--------------------- Case 3(i) --------------------
----------------------------------------------------
Claimed boundary minimiser in appendix:
  x0 = 0.3356, y = d = 0.231, with B(x0,d) = 1.0029

Dense-grid minimum:
  B = 1.00286 at x = 0.33555 and y = 0.23100


----------------------------------------------------
-------------------- Case 3(ii) --------------------
----------------------------------------------------
Interior stationary point:
  x* = 0.4466, y* = 0.3437, B(x*,y*) = 1.0005
  dB/dx = -8.2937525312426315e-16
  dB/dy = -4.100376657139772e-17

Dense-grid minimum:
  B = 1.00047 at x = 0.44653 and y = 0.34365


----------------------------------------------------
------------------- Case 3(iii) --------------------
----------------------------------------------------
Claimed minimiser:
  x = 1/2, y = y0(1/2) = 0.38289,
  with B(1/2,y0) = 1.0203

Dense-grid minimum:
  B = 1.02032 at x = 0.50000 and y = 0.38290


----------------------------------------------------
-------------------- Case 3(iv) --------------------
----------------------------------------------------
Claimed minimiser:
  x = 0.7, y = 0.3, with B(0.7,0.3) = 1.4444

Dense-grid minimum:
  B = 1.44438 at x = 0.70000 and y = 0.30000


----------------------------------------------------
--------------------- SUMMARY ----------------------
----------------------------------------------------
(i)   claimed >= 1.0028; computed = 1.0028576228751145
(ii)  claimed >= 1.0004; computed = 1.000471331469278
(iii) claimed >= 1.0203; computed = 1.020322217607493
(iv)  claimed >= 1.4443; computed = 1.4443803184984163
\end{lstlisting}

\section{Numerical verification - Mathematica}\label{app:CodeMathematica}
\begin{lstlisting}[
    style=wolfram,
    caption={Independent Mathematica verification of the numerical optimisation.},
    label={lst:mathematica-verification}
]
(*Independent Mathematica verification of Appendix A,Case 3*)
ClearAll["Global`*"];
d = 231/1000;

(*Definitions*)
f[x_] := 
  Piecewise[{{1/2, x <= 2/5}, {(7 - 10 x)/6, 2/5 < x <= 7/10}, {0, 
     x > 7/10}}];
m[x_] := 
  Piecewise[{{1/(2 x) + 5/4, 
     x <= 8/35}, {Sqrt[70]/(3 Sqrt[x]) - 5/3 - 1/(6 x), 
     8/35 < x <= 7/10}, {1/x, x > 7/10}}];
B[x_, y_] := ((x - d) m[y] + (y - d) m[x] + (f[x]/x) (1 - x - y));

(*Direct constrained minimisation*)
case1 = NMinimize[{B[x, y], d <= y <= x <= 2/5}, {x, y}, 
   WorkingPrecision -> 30, AccuracyGoal -> 12, PrecisionGoal -> 12];
case2 = NMinimize[{B[x, y], 2/5 <= x <= 1/2 && d <= y <= x}, {x, y}, 
   WorkingPrecision -> 30, AccuracyGoal -> 12, PrecisionGoal -> 12];
case3 = NMinimize[{B[x, y], 1/2 <= x <= 7/10 && d <= y <= 1 - x}, {x, 
    y}, WorkingPrecision -> 30, AccuracyGoal -> 12, 
   PrecisionGoal -> 12];
case4 = NMinimize[{B[x, y], 
    7/10 <= x <= 1 - d && d <= y <= 1 - x}, {x, y}, 
   WorkingPrecision -> 30, AccuracyGoal -> 12, PrecisionGoal -> 12];

(*Results*)
Print["Case 3(i):   ", N[case1, 12]];
Print["Case 3(ii):  ", N[case2, 12]];
Print["Case 3(iii): ", N[case3, 12]];
Print["Case 3(iv):  ", N[case4, 12]];

(*Check the lower bounds stated in appendix*)

values = First /@ {case1, case2, case3, case4};

Print[""];
Print["Stated lower bounds:"];
Print["Case (i):   ", N[values[[1]], 12], " > 1.0028 : ", 
  values[[1]] > 1.0028];
Print["Case (ii):  ", N[values[[2]], 12], " > 1.0004 : ", 
  values[[2]] > 1.0004];
Print["Case (iii): ", N[values[[3]], 12], " > 1.0203 : ", 
  values[[3]] > 1.0203];
Print["Case (iv):  ", N[values[[4]], 12], " > 1.4443 : ", 
  values[[4]] > 1.4443];
\end{lstlisting}

\subsection{Ouput}
\begin{lstlisting}[
    style=output,
    caption={Output of the Mathematica numerical verification.},
    label={lst:Mathematica-output}
]

Case 3(i):   {1.00285762288,{x->0.335575022906,y->0.231000000000}}
Case 3(ii):  {1.00047133147,{x->0.446557148304,y->0.343704522940}}
Case 3(iii): {1.02032221761,{x->0.500000000000,y->0.382893115617}}
Case 3(iv):  {1.44438031850,{x->0.700000000000,y->0.300000000000}}

Stated lower bounds:
Case (i):   1.00285762288 > 1.0028 : True
Case (ii):  1.00047133147 > 1.0004 : True
Case (iii): 1.02032221761 > 1.0203 : True
Case (iv):  1.44438031850 > 1.4443 : True
\end{lstlisting}

\end{document}